\documentclass{amsart}


%
%
%
%

\usepackage{amsthm,amssymb,verbatim}
\usepackage{graphicx}
\usepackage{enumerate}

\usepackage{hyperref}
\usepackage[alphabetic, msc-links]{amsrefs}





\newcommand{\xx}{\mathbf{x}}
\newcommand{\vv}{\mathbf{v}}
\newcommand{\ee}{\mathbf{e}}



 \newcommand{\FF}{\mathcal{F}}
 \newcommand{\RR}{\mathbf{R}}  
 \newcommand{\BB}{\mathbf{B}}  
 \renewcommand{\SS}{\mathbf{S}}  

 \newcommand{\eps}{\epsilon}
 \newcommand{\Tan}{\operatorname{Tan}}
 


    \newtheorem{theorem}    {Theorem}     
     
    \newtheorem{lemma}      [theorem]       {Lemma}
    \newtheorem*{corollary}{Corollary}

    \newtheorem*{claim}{Claim}

    \theoremstyle{definition}

    \theoremstyle{definition}

\begin{document}

  \newcommand{\marginpor}[1]{}

\renewcommand{\thesubsection}{\thetheorem}

\title[Subsequent Singularities]{Subsequent Singularities in\\
Mean-Convex Mean Curvature Flow}
\author{Brian White}
\address{Department of Mathematics, Stanford University, Stanford, CA 94305}
\thanks{This research was supported by the National Science Foundation
  under grants~DMS-0406209 and DMS~1105330}
\email{white@math.stanford.edu}
\subjclass[2000]{Primary 53C44; Secondary 49Q20.}
\date{March 8, 2011.  Revised March 30, 2013.}

\begin{abstract}
We use Ilmanen's elliptic regularization to prove that for an initially smooth mean convex hypersurface in $\RR^n$ 
moving by mean curvature flow, the surface is very nearly convex in a spacetime
neighborhood of every singularity.  In particular, the tangent flows are all shrinking spheres or cylinders.
Previously this was known only  (i) for $n\le 7$,
and (ii) for arbitrary $n$ up to the first singular time.
\end{abstract}

\maketitle

\section{Introduction}

Let $\overline{\Omega}$ be a smooth compact $n$-dimensional Riemannian manifold with smooth, nonempty boundary
and let $\Omega$ be its interior.
Suppose that the mean curvature vector of $\partial \Omega$ is at every point a nonnegative
multiple of the inward pointing unit normal, and suppose that no connected component of $\partial \Omega$
is a minimal surface.   Then there is a unique weak solution $t\in [0,\infty)\to M_t$ 
of mean curvature flow with $M_0=\partial \Omega$.   The surfaces $M_t$ 
 for distinct values of $t$ are disjoint.  According to~\cite{White-Size}, the singular set
$X$ has Hausdorff dimension at most $(n-2)$.

Under certain circumstances, one can say much more about the singularities.  Let us say that a singular point $x\in M_t$
has {\bf convex type} provided:
\begin{enumerate}
\item the tangent flows at $x$ are shrinking cylinders, and
\item for each sequence $x_i\in M_{t(i)}$ of regular points that converge to $x$,
\[
\liminf_i \frac{\kappa_1(M_{t(i)}, x_i)}{h(M_{t(i)},x_i)}  \ge 0. \tag{*}
\]
\end{enumerate}
Here $\kappa_1, \kappa_2, \dots, \kappa_{n-1}$ are the principal curvatures, ordered so that
$\kappa_1\le \kappa_2 \le \dots \le \kappa_{n-1}$ and so that $h=\sum_i\kappa_i>0$.

Singularities of convex type have other nice properties.  For example, suppose $p_i\in M_{t(i)}$ is a sequence
of regular points that converges to a singular point of convex type. Translate $M_{t(i)}$ by $-p_i$ and dilate
by the mean curvature at $p_i$ to get a surface $M_i'$.  Then the $M_i'$ converge smoothly, after passing
to a subsequence, to smooth convex hypersurface of $\RR^n$.  (See the corollary to theorem~1 
 in~\cite{white-nature}).

In~\cite{white-nature}*{Theorem~1} the following theorem was 
proved\footnote{Part~\eqref{SummaryTheorem-item2} 
of theorem~\ref{SummaryTheorem} was also proved
by Huisken and Sinestrari~\cite{Huisken-Sinestrari} using different methods.}:
\begin{theorem}\label{SummaryTheorem}
\begin{enumerate}[\upshape(1)]
\item
If $n < 8$, then all the singularities are of convex type.
\item\label{SummaryTheorem-item2}
For any $n$, 
if $\Omega\subset \RR^n$ and if $T$ is the first time
that a singularity occurs, then the singularites of the truncated flow
\[
  t\in [0,T] \mapsto M_t
\]
have convex type.  (In other words, tangent flows to the singularities
at time $T$ are shrinking cylinders, and \thetag{*} holds provided $t_i\le T$.)
\end{enumerate}
\end{theorem}

In this paper, we prove that if $\Omega\subset \RR^n$ (or more generally if $\Omega$ is a subset
of a flat Riemannian manifold), then all of the singularities (not just the first singularities)
have convex type.  This result was announced in~\cite{white-nature}*{p.~667}.

The key to proving Theorem~\ref{SummaryTheorem} was ruling out nonflat minimal cones as blowups
(as tangent flows or more generally as limit flows in the terminology of \cite{white-nature}).
  In dimensions $n<8$, this was done using the fact that there are no nonflat
one-sided minimizing minimal hypercones in $\RR^k$ for $k<8$.

For dimensions $n\ge 8$, a different argument was required to exclude nonflat
minimal cones.   In fact, the 
proof\,\footnote{See appendix~\ref{convex-type-appendix} for more details.}
 given in~\cite{white-nature} shows that a singular point $x$
has convex type provided
$
     \kappa_1/h
$
is bounded below on the regular points in some neighborhood of $x$.
Thus to prove that all of the singularities have convex type, it suffices to prove
for each compact set $K$ that there is a finite lower bound on
$
     \kappa_1/h
$
for the regular points of the flow that lie in $K$.  
This we do in theorem~\ref{MainTheorem} below.   

(In~\cite{white-nature}, such a lower bound on $\kappa_1/h$ was proved only up to and including the first singular time,
and hence only the first time-singularities were proven to have convex type.)

We also prove an analogous result for motion of  hypersurfaces
with boundary, where the motion of the boundary is prescribed and the motion of  the interior
is by mean curvature flow.  (In particular, at each regular point, the normal component
of velocity at each interior point is equal to the mean curvature.)   See \S\ref{BoundarySection}.

\section{Translators}\label{Translators}
Let $M$ be a surface in Euclidean space and $\vv$ be a nonzero vector.
We say that $M$ {\em translates} 
with velocity $\vv$ provided the surfaces $M+ t\vv$ ($t\in \RR$)
satisfy the equation for mean curvature flow, i.e., provided the normal
component of the velocity vector (namely $\vv$) at each point is equal
to the mean curvature at that point:
\[
   H = \vv^\perp. \tag{*}
\]
Ilmanen~\cite{Ilmanen-Elliptic} observed that a surface $M$ translates with velocity $\vv$ if and only if it is stationary
for the weighted area:
\[
    \int_{\xx\in M} e^{\vv\cdot \xx}\,dA(\xx).
\]
(One readily checks that \thetag{*} is the Euler-Lagrange equation
for this functional.)
In other words, $M$ translates with velocity $\vv$ if and only if $M$ is a
minimal surface with respect to the Riemannian metric:
\[
  g_{ij}(x) = e^{2\vv\cdot \xx/m}\delta_{ij},
\]
where $m=\dim(M)$.   

\section{A Maximum Principle}

As discussed in the introduction, we need to prove lower bounds on the quantity 
 $\kappa_1/h$.
For smooth flows, $\kappa_1/h$ satisfies a maximum principle which we describe here.

Let $\Sigma$ be a connected $(n-1)$-manifold with boundary and let
$F: \Sigma\times [a,b]\to \RR^n$ be a one-parameter family of immersion that solve 
the normal motion by mean curvature equation
\[
  \frac{\partial F}{\partial t} = H \tag{*}
\]
where $H$ is mean curvature vector of $F(\Sigma,t)$ or, equivalently, the Laplacian of $F$ with respect the metric on 
$\Sigma$ induced by $F(\cdot,t)$.  
Suppose that the mean curvature is everywhere positive.
Let $\kappa_1, \dots, \kappa_n$ be the principle curvatures with respect to the 
direction of motion, so that $h=\sum_i\kappa_i>0$.  The scale invariant quantity $\kappa_1/h$
is a measure of umbilicity: it less than or equal to $1/(n-1)$, with equality if and only if the point
is umbilic.  

\begin{theorem}\label{MaximumPrinciple}
let $F:\Sigma\times [a,b]\to \RR^n$ be a smooth solution of~\thetag{*} with $h>0$ in 
$\Sigma\times [a,b]$.
If the minimum of $\kappa_1/h$ on $\Sigma\times [a,b]$ occurs at a point $(x,t)$ where $x$ is in the interior
of $\Sigma$ and $t>a$, then $\kappa_1/h$ is constant on $\Sigma\times [a,t]$.
\end{theorem}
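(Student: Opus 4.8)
The plan is to apply the strong maximum principle to the evolution equation satisfied by $\kappa_1/h$. First I would recall (or rederive) the parabolic equation governing $\kappa_1/h$ along the flow. The mean curvature $h$ satisfies $\partial_t h = \Delta h + |A|^2 h$, and the full second fundamental form $A$ (equivalently the Weingarten map) satisfies a Simons-type reaction--diffusion equation $\partial_t A = \Delta A + |A|^2 A$ in the appropriate bundle sense. Since $h>0$ on all of $\Sigma\times[a,b]$, the function $f = \kappa_1/h$ is a well-defined continuous function, smooth wherever $\kappa_1$ is a simple eigenvalue. The quantity $\kappa_1$ is only Lipschitz in general, so the correct framework is to treat $f$ as a viscosity subsolution from above (or to use Hamilton's tensor maximum principle directly on a suitable tensor inequality). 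Concretely, for any unit vector $e$, the function $g_e = A(e,e)/h$ is smooth, $f = \min_e g_e$, and one computes that $g_e$ satisfies a parabolic inequality of the form
\[
  \left(\partial_t - \Delta\right) g_e \;\ge\; \frac{2}{h}\,\nabla h\cdot \nabla g_e \;+\; (\text{terms that vanish or have a favorable sign when }e\text{ is a }\kappa_1\text{-direction}),
\]
where the key point — this is the heart of White's earlier maximum-principle computations for $\kappa_1/h$ — is that the zeroth-order reaction terms, after the $|A|^2$ contributions from the numerator and denominator cancel, leave only a term that is nonnegative precisely at a minimum of $\kappa_1/h$ (it involves $\sum_{j}(\kappa_j - \kappa_1)^2$-type expressions divided by $h$ times gaps $\kappa_j - \kappa_1 \ge 0$).

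Granting this, the argument is standard: suppose the minimum $m_0$ of $f = \kappa_1/h$ over $\Sigma\times[a,b]$ is attained at an interior spacetime point $(x_0,t_0)$ with $t_0>a$. Working on a small parabolic neighborhood of $(x_0,t_0)$ on which we may select a smooth unit vector field $e$ with $A(e,e)/h = \kappa_1/h$ at $(x_0,t_0)$, the function $g_e$ satisfies $(\partial_t - \Delta)g_e \ge b\cdot\nabla g_e$ for a smooth vector field $b$ (coming from $\frac{2}{h}\nabla h$ and the derivative terms in the evolution of the eigenvector), has a spacetime interior minimum equal to $m_0$ at $(x_0,t_0)$, and satisfies $g_e \ge f \ge m_0$ everywhere nearby. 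By the strong maximum principle for linear uniformly parabolic operators (the Nirenberg/Hopf strong maximum principle, applicable since on a fixed smooth flow on $[a,b]$ the operator $\partial_t - \Delta - b\cdot\nabla$ is uniformly parabolic with bounded coefficients), $g_e \equiv m_0$ on the backward parabolic neighborhood, hence $f \equiv m_0$ there too. A standard connectedness/continuation argument in the time variable — using that $\Sigma$ is connected and that the set $\{(x,t): f(x,t) = m_0,\ t\le t_0\}$ is both open and closed in $\Sigma\times[a,t_0]$ — then propagates the equality to all of $\Sigma\times[a,t_0]$.

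The main obstacle is the non-smoothness of $\kappa_1$ where the lowest principal curvature has multiplicity greater than one, which is exactly the situation one expects at a minimum of $\kappa_1/h$ (for instance if the minimizing configuration is umbilic or has a degenerate bottom eigenvalue). Handling this rigorously requires either (a) the trick above of passing to $g_e = A(e,e)/h$ for a cleverly chosen smooth test vector field and checking that the unfavorable commutator/derivative terms vanish to the needed order at the minimum point, or (b) invoking Hamilton's maximum principle for symmetric tensors applied to the tensor inequality $A - m_0\, h\, I \ge 0$, showing the null-eigenvector set is parallel and invariant under the flow, which is the cleanest route and which I expect the author to take. A secondary (minor) point is verifying the precise sign of the gradient/reaction terms in the evolution of $g_e$; this is a direct but somewhat delicate computation using the evolution equations for $A$ and $h$ together with the first-order condition $\nabla(\kappa_1/h) = 0$ at the minimum, and it is precisely the computation already carried out in \cite{White-Nature}, so I would cite it rather than reproduce it.
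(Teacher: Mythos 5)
Your plan is essentially the argument of the cited source: the paper itself gives no proof of this theorem, deferring entirely to Theorem~3 of \cite{White-Nature}, and the strategy you describe --- evolution equations for $A$ and $h$, handling the non-smoothness of $\kappa_1$ by testing with $A(e,e)/h$ (or a tensor maximum principle), and then invoking the strong maximum principle plus an open-closed propagation argument --- is exactly what is carried out there. Since you also explicitly defer the delicate sign computation to \cite{White-Nature}, your proposal matches the paper's treatment and is correct.
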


See~\cite{white-nature}*{Theorem~3} for the proof.

\begin{corollary}\label{MaxPrinCorollary}
Suppose $M\subset \RR^n$ is a compact smooth, connected, mean-convex hypersurface (with boundary) such that the mean curvature is nowhere $0$ and 
such that $M$ translates with velocity $\vv$, i.e., such that
\[
    H = \vv^\perp,
\]
for some vector $\vv$.  Then the minimum of $\kappa_1/h$ on $M$ 
is attained on $\partial M$.  If it is attained at any interior point, then it is constant on $M$.
\end{corollary}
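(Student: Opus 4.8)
The plan is to realize $M$ as a time‑slice of a mean curvature flow — the translating flow $t\mapsto M_t:=M+t\vv$ — and feed that flow into Theorem~\ref{MaximumPrinciple}. The first step is to record what translation does to the relevant quantities. Since $M$ translates with velocity $\vv$, the surfaces $M_t$ ($t\in\RR$) solve mean curvature flow, and each is the image of $M$ under a translation of $\RR^n$, so the principal curvatures — hence $\kappa_1$, $h$, and $\kappa_1/h$ — at the point $p+t\vv\in M_t$ agree with those at $p\in M$. Thus if we parametrize the flow by $F(p,t)=p+t\vv$ over the base $\Sigma=M$, the function $(\kappa_1/h)\circ F$ is independent of $t$ and equals the original function $\kappa_1/h$ on $M$; moreover $h>0$ throughout, since $M$ is mean‑convex with mean curvature nowhere zero.

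Granting for the moment that Theorem~\ref{MaximumPrinciple} applies to this $F$, the rest is quick. Suppose the minimum $\mu$ of $\kappa_1/h$ on $M$ is attained at an interior point $p_0$, and fix any $[a,b]$ with $a<b$. Because $(\kappa_1/h)\circ F$ does not depend on $t$, it attains its minimum over $\Sigma\times[a,b]$ at $(p_0,b)$ — interior in space, with $b>a$ — so Theorem~\ref{MaximumPrinciple} makes it constant on $\Sigma\times[a,b]$, and evaluating at $t=a$ gives $\kappa_1/h\equiv\mu$ on $M$. This is the second assertion. For the first, note that $\partial M\neq\emptyset$: otherwise $f:=\vv\cdot\xx$ would be a function on a closed manifold with $\Delta_M f=\vv\cdot H=\vv\cdot\vv^\perp=|\vv^\perp|^2>0$ everywhere (using $\Delta_M\xx=H$ and the translator equation $H=\vv^\perp$), which is impossible. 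Since $M$ is compact, $\kappa_1/h$ attains its minimum somewhere; if at a point of $\partial M$ we are done, and if at an interior point then it is constant on $M$ by the previous sentence, hence again attained on $\partial M$.

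The step that needs care — and the one I expect to be the only real obstacle — is justifying the use of Theorem~\ref{MaximumPrinciple}, which is stated for the \emph{normal}-motion equation $\partial_t F=H$, whereas $F(p,t)=p+t\vv$ moves with velocity $\vv=H+\vv^\top$, a tangential component $\vv^\top$ that in general is not tangent to $\partial M$. The quickest fix is to observe that the tangential velocity only adds a first‑order term $\vv^\top\cdot\nabla(\kappa_1/h)$ to the evolution of $\kappa_1/h$, which is harmless for the strong maximum principle underlying Theorem~\ref{MaximumPrinciple}, so that the theorem applies verbatim. Alternatively, one can genuinely reparametrize by the flow of $-\vv^\top$; since that flow need not preserve $M$ (it may push $\partial M$ off the surface), one carries this out over an exhaustion of the interior of $M$ by compact connected subdomains $\Sigma'$, applies Theorem~\ref{MaximumPrinciple} on $\Sigma'\times[0,\tau]$ for small $\tau$ (checking that when $\min_M\kappa_1/h$ lies in the interior of $\Sigma'$ the spacetime minimum over $\Sigma'\times[0,\tau]$ is attained at some $(x_*,\tau)$ with $x_*$ interior and $\tau>0$), concludes $\kappa_1/h$ is constant on each $\Sigma'$, and passes to the limit. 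Everything else follows directly from Theorem~\ref{MaximumPrinciple} together with the compactness and connectedness of $M$.
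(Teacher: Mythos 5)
Your proposal is correct and follows essentially the paper's own route: the paper likewise realizes $M$ as a time-slice of the translating flow $t\mapsto M+t\vv$ and invokes Theorem~\ref{MaximumPrinciple}, disposing of the fact that $F(p,t)=p+t\vv$ does not solve $\partial F/\partial t = H$ by exactly your second fix --- near any interior point and for a short time interval the flow can be reparametrized by an $F$ as in the theorem, after which connectedness of $M$ propagates the constancy. Your verification that $\partial M\neq\emptyset$ and your explicit discussion of the tangential velocity term are details the paper leaves implicit, not deviations from its argument.
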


Of course the flow is $t\in\RR \mapsto M +t\vv$.  The corollary follows from the theorem
because for any point interior point $p$ of $M$, there is an $\eps>0$ such that near
$p$ and for $-\eps<t<\eps$, the flow can be parametrized by an $F$ as in the theorem.

\section{The Main Result}

To state the main theorem, it is convenient to introduce some notation.
Let $\Omega$ be an open subset of $\RR^n$ and let $u:\Omega\to\RR$
be a continuous function.  We think of $u$ as describing a flow of hypersurfaces:
\[
  t\in \RR \mapsto u^{-1}(t).
\]
We will say that  $x\in \Omega$ is a {\bf regular point} of the flow provided $u$ is smooth
in a neighborhood of $x$ and $\nabla u(x)\ne 0$. Otherwise $x$ is a singular point of the flow.
For the flows we will consider, the mean curvature is nonzero at every regular point.
(Indeed, the mean curvature vector will be equal to $\nabla u / |\nabla u|^2$.)  For such a point~$x$, we let
$\kappa_1(u,x)\le \kappa_2(u,x)\le\dots \le \kappa_{n-1}(u,x)$ be the principal curvatures
of the level set $u=u(x)$  with respect to normal direction given by the mean curvature
vector, and we let
\[
   h(u,x) := \kappa_1(u,x) + \dots + \kappa_{n-1}(u,x) > 0
\]
be the scalar mean curvature of the level set $u=u(x)$ at $x$.

\begin{theorem}\label{MainTheorem}
Let $\Omega$ be a bounded open domain in $\RR^n$ with smooth, mean convex boundary.
Let $t\in[0,\infty)\mapsto M(t)$ be the mean curvature flow with $M(0)=\partial \Omega$,
and let $u:\overline{\Omega} \to \RR$ be the function such that $u(x)=t$ for  $x=M(t)$.
(Thus $u(x)$ is the time that at which the moving surface reaches the point $x$.)
Then
\begin{enumerate}[\upshape(1)]
\item The singular set $X$ is a compact subset of $\Omega$ with 
Hausdorff dimension at most $(n-2)$, 
and the spacetime singular set\footnote{Spacetime singular sets
and parabolic Hausdorff dimension are not needed in this paper.
The spacetime singular set is $\{(x,u(x)): x\in X\}$.  
The parabolic Hausdorff dimension of a subset of $\RR^n\times \RR$ is
its Hausdorff dimension with respect to the metric 
  $d((x,t),(x',t'))=\max\{ |x-x'|, |t-t'|^{1/2} \}$.
See~\cite{White-Size}*{p.\ 666}
or~\cite{White-Stratification}*{\S7}.}
has parabolic Hausdorff dimension at most $(n-2)$.
\item If $p$ is a singular point, then
\[
    \liminf_{x\in \Omega\setminus X \to p} \frac{\kappa_1(u,x)}{h(u,x)} \ge 0.
\]
\item The tangent flows to the flow $t\mapsto M(t)$ at singularities are all shrinking spheres and cylinders.
\end{enumerate}
\end{theorem}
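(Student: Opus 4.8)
The plan is to reduce everything to a single quantitative statement: for each compact set $K\subset\RR^n$ there is a finite constant $c(K)$ such that $\kappa_1(u,x)/h(u,x)\ge -c(K)$ for every regular point $x\in K$. As explained in the introduction, Theorem~\ref{SummaryTheorem}'s proof in~\cite{White-Nature} already shows that such a local lower bound forces every singular point to be of convex type, which gives parts (2) and (3); part (1) is~\cite{White-Size} together with the stratification results cited. So the entire content is the lower bound on $\kappa_1/h$.

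To prove the lower bound I would use Ilmanen's elliptic regularization, as in~\S\ref{Translators}. One approximates the mean curvature flow of $\partial\Omega$ by the translating solitons that arise from minimal surfaces for the conformally-flat metric $g_{ij}=e^{2\vv\cdot\xx/m}\delta_{ij}$ in one higher dimension, with $\vv=e_{n+1}/\eps$; sending $\eps\to 0$ recovers the flow $t\mapsto M(t)$ via $u$ as the limit of the height functions. The key point is that these approximating translators $M_\eps\subset\RR^{n+1}$ are, for each fixed $\eps$, compact smooth mean-convex hypersurfaces with boundary equal to $\partial\Omega\times\{0\}$ (or a small perturbation thereof), and they translate with velocity $\vv$. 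Hence Corollary~\ref{MaxPrinCorollary} applies: the minimum of $\kappa_1/h$ on $M_\eps$ is attained on $\partial M_\eps$. On the boundary, which is a fixed smooth mean-convex hypersurface independent of $\eps$, one has an $\eps$-independent lower bound for $\kappa_1/h$ (at worst, one controls it by the geometry of $\partial\Omega$ and of the product with a line). Therefore $\kappa_1/h\ge -c(\partial\Omega)$ on all of $M_\eps$, uniformly in $\eps$.

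The final step is to pass this bound to the limit. As $\eps\to0$, over a compact subset $K$ of $\Omega$ the translators $M_\eps$ (suitably reparametrized by the height function) converge in the appropriate weak sense to the spacetime track of $t\mapsto M(t)$, and at regular points of the limiting flow the convergence is smooth by Brakke-type regularity together with White's local regularity theorem. Smooth convergence means $\kappa_1$, $h$, and the ambient conformal factor $e^{2\vv\cdot\xx/m}$ (which tends to $1$ on compact sets of $\RR^n\times\RR$, and whose derivatives tend to $0$) all converge, so the Euclidean $\kappa_1/h$ of the level sets of $u$ inherits the bound $\ge -c(\partial\Omega)$ on $K$. This is exactly the lower bound needed.

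The main obstacle I expect is precisely the last step — controlling the geometry of the approximating translators near the boundary $\partial M_\eps$ and ensuring the smooth convergence on compact subsets of $\Omega$ is uniform enough to transport a curvature-ratio bound. The boundary analysis requires knowing that the translators attach to $\partial\Omega\times\{0\}$ in a controlled way (they are graphs of the rescaled height function $\eps^{-1}u_\eps$, and one must bound the relevant curvatures of $M_\eps$ at the boundary independently of $\eps$); this is the technical heart of Ilmanen's construction adapted to the mean-convex setting, and it is where the hypotheses on $\partial\Omega$ (mean-convex, no minimal components) get used. Once the boundary estimate is in hand and the barrier/maximum-principle argument of Corollary~\ref{MaxPrinCorollary} is invoked, the interior propagation and the limit are comparatively routine.
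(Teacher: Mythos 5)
Your overall strategy is the paper's: reduce (2) and (3) to a local lower bound on $\kappa_1/h$ over compact subsets of $\Omega$ (citing \cite{White-Nature}), get (1) from \cite{White-Size}, and obtain the bound from elliptic regularization together with the translator maximum principle. However, there is a genuine gap at the step where you invoke Corollary~\ref{MaxPrinCorollary}. You apply it to the \emph{entire} translator $N_\lambda$ (your $M_\eps$), whose boundary is $\partial\Omega\times\{0\}$, and you assert an $\eps$-independent lower bound for $\kappa_1/h$ there ``controlled by the geometry of $\partial\Omega$.'' That is not justified: $\kappa_1/h$ at a boundary point of $N_\lambda$ is a ratio of principal curvatures of the hypersurface $N_\lambda$ itself, which depend on $Df_\lambda$ and $D^2 f_\lambda$ at $\partial\Omega$, not on $\partial\Omega$ alone; and these degenerate as $\lambda\to\infty$, since $f_\lambda\approx\lambda u$ makes the graph nearly vertical near its boundary. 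The uniform boundary curvature estimate you defer to as ``the technical heart'' is precisely the missing ingredient, and it is not supplied by Ilmanen's construction. Transporting such a bound to the limit would in any case require smooth convergence up to $\partial\Omega\times\{0\}$, i.e.\ up to time $0$ at the initial surface, which the local regularity theorem does not directly provide (there is no backwards parabolic neighborhood at $t=0$).

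The paper's proof sidesteps all of this by localizing the maximum principle away from $\partial\Omega$: enlarge $K$ (e.g.\ $K=\{u\ge\eps\}$) so that its interior contains the singular set $X$ and $\partial K$ consists of regular points of the flow at positive times, and apply Corollary~\ref{MaxPrinCorollary} to $N_\lambda\cap(K\times\RR)$, whose relevant boundary lies over $\partial K$. Because $\partial K$ is in the regular set, $U_\lambda\to U$ smoothly there, so the boundary minimum in the maximum principle converges to $0\wedge\min_{\partial K}\kappa_1(u,\cdot)/h(u,\cdot)$ --- the ``$0\wedge$'' arising because the level sets of $U$ are products $M_t\times\RR$ and so acquire an extra zero principal curvature --- and this is a finite constant depending only on $K$. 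No a priori estimate at $\partial\Omega\times\{0\}$ is ever needed. You should replace your boundary step with this localization. A minor further point: the maximum principle is applied to the Euclidean principal curvatures of the translating graph; the conformal factor $e^{2\vv\cdot\xx/m}$ enters only in the variational characterization used for existence and regularity of $N_\lambda$, so it plays no role in the curvature convergence.
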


As stated in the introduction, a singular point for which (2) and (3) hold is said to have {\bf convex type}.
Thus theorem~\ref{MainTheorem} states that the singular set is small, and that all singular points have convex type.

\begin{proof}
Statement (1) was proved in~\cite{White-Size}.  
As mentioned in the introduction (see also appendix~\ref{convex-type-appendix}), to prove convex type, 
it suffices  to prove 

\begin{claim}
Let $\Omega$ and $u:\overline{\Omega}\to \RR$ be as in the statement of 
the theorem.
Let $K$ be a compact subset of $\Omega$.  Then there is an $\alpha_K\in\RR$ such
that 
\[
   \frac{\kappa_1(u,x)}{h(u,x)} \ge \alpha_K
\]
for all regular points $x\in K$.
\end{claim}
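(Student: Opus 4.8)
The plan is to use Ilmanen's elliptic regularization, exactly as the abstract advertises. First I would fix the compact set $K\subset\Omega$ and choose a slightly larger compact set $K'$ with $K\subset\operatorname{int}(K')\subset K'\subset\Omega$. The strategy is to show that if the desired lower bound fails on $K$, then by a blow-up/compactness argument one produces a limit object that is an ancient or eternal flow, in fact a translating hypersurface $M$ arising as a limit of the elliptic-regularization approximations, such that $M$ has $h>0$ but $\inf_M \kappa_1/h=-\infty$ (or at least a negative value violating what the maximum principle allows). This is where Corollary~\ref{MaxPrinCorollary} enters: a complete translator with $h>0$ either has $\kappa_1/h$ bounded below by its boundary minimum, or, if it attains an interior minimum, $\kappa_1/h$ is constant. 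In the noncompact/boundaryless limit there is no boundary, so one must argue that the infimum is attained (using a point-selection argument à la Omori--Yau or a rescaling that moves the near-infimum point to a fixed location) and then conclude constancy; a constant value of $\kappa_1/h$ on a translator, combined with $h>0$, forces the translator to be a cylinder $\SS^k\times\RR^{n-1-k}$ (flat directions) times the bowl-type factor — in any case something with $\kappa_1/h\ge 0$, contradicting the assumed negativity.

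Concretely, the key steps in order: (i) Set up Ilmanen's elliptic regularization: for $\eps>0$ the translating solutions $u_\eps$ of the regularized equation in $\overline\Omega\times\RR$ (graphs in one higher dimension translating with velocity $\eps^{-1}\ee_{n+1}$, i.e. minimal surfaces for the conformally flat metric $e^{2\vv\cdot\xx/m}\delta_{ij}$ from Section~\ref{Translators}) converge as $\eps\to 0$ to the weak mean curvature flow $u$. (ii) Establish a uniform-in-$\eps$ lower bound for $\kappa_1/h$ on $K$ for the regularized flows: since each $u_\eps$ is a smooth translator in $\RR^{n+1}$ with boundary, Corollary~\ref{MaxPrinCorollary} (applied in dimension $n+1$) reduces the bound on $K$ to a bound on the boundary $\partial\Omega\times\RR$, where everything is controlled by the smooth initial surface $\partial\Omega$ and is uniform in $\eps$. (iii) Pass to the limit $\eps\to0$: by White's regularity / Brakke-type compactness for the elliptic-regularization approximations, the smooth convergence on the regular part of the flow transfers the lower bound on $\kappa_1/h$ from the $u_\eps$ to $u$ itself on $K$. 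That yields $\alpha_K$.

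The subtle point — and the one I expect to be the main obstacle — is step (ii): getting a lower bound for $\kappa_1/h$ on the regularized flow that is genuinely uniform in $\eps$ and does not blow up as $\eps\to0$. The boundary $\partial\Omega\times\RR$ of the $(n+1)$-dimensional graph is noncompact, so ``the minimum is attained on $\partial M$'' needs care; one must show the relevant infimum over the boundary is controlled purely in terms of the geometry of $\partial\Omega$, using that far out along the $\RR$ (translation) direction the graph looks like a vertical cylinder over $\partial\Omega$ whose second fundamental form in $\RR^{n+1}$ is just that of $\partial\Omega$ in $\RR^n$ — so $\kappa_1/h$ there converges to the corresponding quantity for $\partial\Omega$, which is a fixed smooth surface and hence bounded below. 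A secondary obstacle is justifying the smooth convergence in step (iii) at points of $K$: this relies on the fact (from \cite{White-Size}, \cite{White-Nature}) that the limit flow is smooth on a full-measure open set and that the approximating sequences converge smoothly there with multiplicity one, so that the curvature quantities converge pointwise; once smooth convergence is in hand the inequality passes to the limit trivially. I would also need to handle the edge where $\partial\Omega$ meets the picture — but since $K$ is a compact subset of the open set $\Omega$, regular points in $K$ stay a definite distance from $\partial\Omega$, so the interior estimate is all that is needed there.
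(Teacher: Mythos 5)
Your overall framework is the paper's: elliptic regularization produces translating graphs $N_\lambda$ in $\overline\Omega\times\RR$, Corollary~\ref{MaxPrinCorollary} pushes the minimum of $\kappa_1/h$ to a boundary, and smooth convergence on the regular set transports the bound to $u$. But the step you yourself flag as ``the main obstacle'' --- step (ii) --- is where your argument diverges from the paper and where it has a genuine gap. You apply the maximum principle to all of $N_\lambda$ and try to control $\kappa_1/h$ on $\partial\Omega\times\RR$ (in fact $\partial N_\lambda=\partial\Omega\times\{0\}$, which is compact, so the noncompactness worry is misplaced) via the heuristic that the graph is nearly a vertical cylinder over $\partial\Omega$ there. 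Making that rigorous requires \emph{uniform-in-$\lambda$ boundary curvature estimates} for the minimizers $N_\lambda$: the graphs $f_\lambda\approx\lambda u$ become vertical at $\partial\Omega$ as $\lambda\to\infty$, so controlling their second fundamental form up to the boundary is a nontrivial boundary-regularity statement that is neither proved nor cited. Worse, the theorem only assumes $\partial\Omega$ is mean convex, so $h$ may vanish somewhere on $\partial\Omega$ and $\kappa_1/h$ need not even be bounded there.

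The paper's fix is different and sidesteps all of this: first enlarge $K$ (e.g.\ to $\{u\ge\eps\}$) so that its \emph{interior} contains the singular set $X$, hence $\partial K$ consists of regular points of the limit flow with $h>0$; then apply Corollary~\ref{MaxPrinCorollary} only to the graph of $f_\lambda$ over $K$, i.e.\ to $N_\lambda\cap(K\times\RR)$, whose relevant boundary sits over $\partial K\subset\Omega\setminus X$. There White's local regularity theorem gives smooth convergence $U_\lambda\to U$, so the boundary minimum converges to $\min_{\partial K}\kappa_1(u,\cdot)/h(u,\cdot)$, a finite number --- no boundary estimates at $\partial\Omega$ are ever needed. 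You should adopt this relocation of the boundary; without it, step (ii) does not close. Two smaller points: your opening blow-up/Omori--Yau paragraph is not needed once steps (i)--(iii) are run and has its own unresolved issues (attainment of the infimum on a noncompact translator, and the unproved classification of translators with constant $\kappa_1/h$); and in passing from the cylinder $M_t\times\RR$ back to $M_t$ you must account for the extra zero principal curvature, which is why the final bound is $0\wedge\min_{\partial K}\kappa_1(u,\cdot)/h(u,\cdot)$ rather than the boundary minimum itself.
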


By enlarging $K$, we may assume that the interior of $K$ contains the singular set $X$.
For example, we can let $K=\{x: u(x)\ge\eps\}$
for a sufficiently small $\eps>0$.   

To prove the claim, we use elliptic regularization, as described in~\cite{Ilmanen-Elliptic}.
Let $N_\lambda$ be the surface (integral current or flat chain mod $2$)
in $\overline{\Omega}\times \RR$ that minimizes the functional
\[
  \int_{N_\lambda} e^{-\lambda x_{n+1}}\,dA 
  =\int_{N_\lambda}e^{-\lambda \xx\cdot\ee_{n+1}}\,dA
\]
subject to $\partial N_\lambda = [\partial \Omega]\times\{0\}$.
Then by standard arguments (see appendix~{solition-appendix} for details), 
      $N_\lambda$ is the given by the graph
of a smooth function $f_\lambda:\overline{\Omega}\to[0,\infty)$.  Of course
$f_\lambda$ satisfies the Euler-Lagrange equation:
\begin{equation}\label{euler-lagrange}
       H\cdot \nu = - \lambda \ee_{n+1}\cdot \nu.
\end{equation}
(Alternatively, one can use PDE methods to prove existence of a solution $f_\lambda$
to~\eqref{euler-lagrange} with $0$ boundary values, and then let $N_\lambda$ be the graph of $f_\lambda$.)
Then (as in \S\ref{Translators}), $N_\lambda$ translates with velocity $-\lambda\ee_{n+1}$, so that
\[
  t\in \RR \mapsto (N_\lambda)_t := \text{graph}(f_\lambda - \lambda t)
\]
is a family of surfaces in $\Omega\times \RR$ moving by mean curvature.
In particular, the corollary to
 Theorem~\ref{MaximumPrinciple} applies to $N_\lambda$.

Let
\begin{align*}
&U_\lambda: \Omega\times \RR \to \RR \\
&U_\lambda(x,y) = \frac{f_\lambda(x)-y}{\lambda}
\end{align*}
so that $U_\lambda{}^{-1}(t) = (N_\lambda)_t$ for all $t\in \RR$.

Also, let
\begin{align*}
&U: \Omega\times \RR \to \RR \\
&U(x,y)=u(x).
\end{align*}
Thus $U$ is the time-of-arrival function for the mean curvature flow $t\mapsto M_t\times \RR$.

Note that each $\kappa_i(U_\lambda,(x,y))$ (for $x\in \Omega$ and $y\in \RR$)
 is independent of $y$.
Thus by the corollary to Theorem~\ref{MaximumPrinciple} (applied to the graph of $f_\lambda$ over $K$ or, equivalently to $N_\lambda\cap (K\times\RR)$),
\begin{equation}\label{TranslatorInequality}
\frac{\kappa_1(U_\lambda, (p,0))}{h(U_\lambda,(p,0))}
\ge
\min\left\{ 
\frac{\kappa_1(U_\lambda, (x,0))}{h(U_\lambda,(x,0))} : x\in \partial K \right\}.
\end{equation}

Now as $\lambda\to \infty$, the flows $t\in [0,\infty)\mapsto (N_\lambda)_t$
converge\footnote{At first, one only knows subsequential convergence
to {\em some} flow $t\in[0,\infty)\mapsto M'_t\times\RR$ with $M'_0=M_0$. 
But then the uniqueness (or nonfattening) for mean curvature flow
of mean convex surfaces implies that $M'_t\equiv M_t$ for $t\ge 0$.
Since the limit is independent of the subsequence, we in fact have convergence
and not just subsequential convergence.}
as brakke flows to the flow $t\mapsto M_t\times \RR$.  Also, the functions $U_\lambda$ converge uniformly to the function $U$.

By the local regularity theorem in~\cite{White-Local} 
(or by Brakke's regularity theorem~\cite{Brakke}), 
the convergence $U_\lambda\to U$
is smooth on $\Omega\setminus X$, and thus
\begin{equation}\label{*}
\frac{\kappa_1(U_\lambda, \cdot)}{h(U_\lambda,\cdot)} 
\to
\frac{\kappa_1(U, \cdot)}{h(U,\cdot)}  
\end{equation}
uniformly on compact subsets of $(\Omega\setminus X)\times \RR$.
Consequently~\eqref{TranslatorInequality} implies that
\begin{equation}\label{ElongatedInequality}
\frac{\kappa_1(U, (p,0))}{h(U,(p,0))}
\ge
\min\left\{ 
\frac{\kappa_1(U_\lambda, (x,0))}{h(U_\lambda,(x,0))} : x\in \partial K \right\}
\end{equation}
for $p\in K\setminus X$.

Finally, notice that for any point $q\in \Omega\setminus X$, 
\begin{equation}\label{hU=hu}
    h(U, (q,0)) = h(u, q)
\end{equation}
and 
\begin{equation}\label{kU-ku}
   \kappa_1(U, (q,0)) = 0 \wedge \kappa_1(u,q)
\end{equation}
where $a\wedge b$ denotes the smaller of $a$ and $b$.  (This is because the level set of $U$
through $(p,0)$ is the level set of $u$ through $p$ times $\RR$, and thus the principal
curvatures of the former are the principal curvatures of the latter together with $0$.)

By~\eqref{ElongatedInequality}, \eqref{hU=hu}, and~\eqref{kU-ku},
\[
   \frac{\kappa_1(u,p)}{h(u,p)} \ge 0 \wedge \min_{\partial K} \frac{\kappa_1(u,\cdot)}{h(u,\cdot)}.
\]
This proves the claim, and hence the theorem.
\end{proof}


\section{Mean Curvature Flow with Boundary}\label{BoundarySection}

In this section, we consider motion $t\mapsto M_t$ of a manifold with boundary, the motion of the boundary
being prescribed and the motion of the interior being by mean curvature (i.e, so that the normal component
of the velocity vector is equal to the mean curvature vector.)

We need to make certain assumptions to guarantee that the surfaces in the flow have positive mean curvature
(except possibly at time $0$ and/or at the boundary).  
We let $\Sigma\,(=M_0)$ be the prescribed initial surface,
$t\mapsto \Gamma_t$ be the prescribed motion of the boundary, and 
$\overline{W}$ be a region in which the flow happens.  
The assumptions are:

\begin{enumerate}[\upshape(1)]
\item\label{Wpiecewise}
        $\overline{W}$ is a compact, connected region in a smooth Riemannian manifold
such that $\partial W$ is the union of two smooth, compact manifolds
$\Sigma$ and $\Sigma'$ that are disjoint except along their common boundary
 $\Gamma$.  For convenience we assume that $\Sigma$ is connected.
\item\label{W-convex}
        $W$ is mean convex: the mean curvature
of $(\partial W)\setminus \Gamma$ is at each point a nonzero multiple of the
inward-pointing unit normal, and the angle between the tangent halfplanes
to $\Sigma$ and $\Sigma'$ is at most $\pi$ at every point of $\Gamma$.
\item\label{GammaT} 
         $t\in [0,\infty)  \mapsto \Gamma_t$ is a smooth $1$-parameter family of 
submanifolds of $\Sigma'$ such that $\Gamma_0=\Gamma$.
\item\label{monotonic} 
        The family $t\mapsto \Gamma_t$ is monotonic is the following sense:
for each $T$, $\Gamma_T$ is the boundary of $\Sigma'\setminus \cup_{t\le T}\Gamma_t$.
\item\label{moving}
         If $\Sigma$ is a minimal surface, then $\Gamma_t\ne \Gamma_0$
         for $t>0$.
\item\label{GammaInfinity} 
       As $t\to\infty$, the $\Gamma_t$ converge smoothly to an embedded submanifold
$\Gamma_\infty$ of $\Sigma'$.
\end{enumerate}

Note that the $\Gamma_t$ are allowed to move but are not required to do so: setting $\Gamma_t\equiv \Gamma_0$ is allowed.
The assumption~\eqref{moving} guarantees that the surface starts moving
immediately:  if $M_0=\Sigma$ is a minimal surface,
then the $M_t$ will not move until the boundary $\Gamma_t$ does.
The assumption~\eqref{GammaInfinity} that $\Gamma_t$ converges smoothly as $t\to\infty$ is not really restrictive, since if $t\mapsto \Gamma_t$ satisfies (3) and (4) but not (5), we can choose 
an abitrarily large $T<\infty$ and then redefine $t\mapsto \Gamma_t$ for $t\ge T$ by setting 
$\Gamma_t=\Gamma_T$ for $t\ge T$.  In this way, we get a boundary motion that satisfies the hypotheses~(3), (4), and (5) and that agrees with the original boundary motion up to time $T$.

\begin{theorem}\label{BoundaryExistence}
Let $W$, $\Sigma$, and $t\mapsto \Gamma_t$ satisfy the
 hypotheses~\eqref{Wpiecewise}--\eqref{GammaInfinity} above.
Then there is a unique weak solution $t\in[0,T)\mapsto M_t$ of mean curvature flow
such that $M_0=\Sigma$ and  such that $\partial M_t=\Gamma_t$ for all $t$. 
The surfaces $M_t\setminus \Gamma_t$ are disjoint (for distinct values of $t$), and the 
the function
\begin{align*}
&u: \overline{W}\setminus \Sigma' \to [0,\infty] \\
&u(x) = \begin{cases}
t &\text{if $x\in M_t$}\\
\infty &\text{if $x\notin \cup_{0\le t < \infty}M_t$}
\end{cases}
\end{align*}
is a continuous function.  Each $M_t$ is rectifiable, and the multiplicity-one varifolds
associated to the $M_t$ form a brakke flow.

Furthermore, there is a compact subset (the singular set) $X$ of $W$ with the following properties:
\begin{enumerate}[\upshape(1)]
\item The set $X$ has Hausdorff dimension at most $(n-2)$, and the spacetime singular set
has parabolic Hausdorff dimension at most $(n-2)$.
\item Each $M_t\setminus X$ is a smooth properly embedded submanifold of $\overline{\Omega}\setminus X$ with
boundary $\Gamma_t$.
\item If  $t>0$ and $t(i)\to t$, then $M_{t(i)}$ converges smoothly to $M_t$ away from $M_t\cap X$.
 \item The surfaces $M_t$ converge as $t\to\infty$ to a minimal variety $M_\infty$, 
   and the convergence is smooth away from the singular set $X\cap M_\infty$, which
   has Hausdorff dimension at most $n-8$.
\end{enumerate}
\end{theorem}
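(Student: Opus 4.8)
The plan is to mimic the proof of Theorem~\ref{MainTheorem}, again using Ilmanen's elliptic regularization, but now in the setting of hypersurfaces with moving boundary. Most of the work is the construction of the weak flow and its basic regularity; the $\kappa_1/h$ bound (and hence convex type of the singularities, which is what gives the conclusions about tangent flows and smooth convergence) comes from the same maximum-principle argument as before, once the analogous translators are in place.

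First I would establish existence and uniqueness of the weak flow $t\mapsto M_t$ with $\partial M_t=\Gamma_t$. The natural approach is again elliptic regularization inside $\overline{W}\times\RR$: for each $\lambda>0$ minimize $\int e^{-\lambda x_{n+1}}\,dA$ among currents (or flat chains mod $2$) in $\overline{W}\times\RR$ with prescribed boundary consisting of $[\Gamma_t]\times\{\lambda t\}$ traced out over $t\ge 0$ (i.e.\ the ``graph'' of the moving boundary in the translating picture), constrained to lie in $\overline{W}\times[0,\infty)$. Hypotheses~\eqref{Wpiecewise}--\eqref{GammaInfinity} are exactly what is needed to run this: mean convexity of $W$ and the angle condition in~\eqref{W-convex} force the minimizer to be a graph $\{x_{n+1}=f_\lambda(x)\}$ over $\overline{W}\setminus\Sigma'$ with $f_\lambda\ge 0$ and $f_\lambda=\lambda t$ on $\Gamma_t$; the monotonicity~\eqref{monotonic} guarantees that this graph is single-valued and that the level sets $\{f_\lambda=\lambda t\}$ are the $M_t$'s; assumption~\eqref{moving} plays the role of ``no component of $\partial\Omega$ is minimal'' and guarantees $f_\lambda$ is not identically a constant (so the flow starts immediately); and~\eqref{GammaInfinity} gives the behavior as $t\to\infty$, i.e.\ that $f_\lambda$ is bounded and the graph limits onto $\Gamma_\infty$. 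The translator $N_\lambda=\mathrm{graph}(f_\lambda)$ satisfies the Euler--Lagrange equation $H\cdot\nu=-\lambda\ee_{n+1}\cdot\nu$, so $t\mapsto(N_\lambda)_t=\mathrm{graph}(f_\lambda-\lambda t)$ is a smooth mean curvature flow (with boundary) to which Corollary~\ref{MaxPrinCorollary} applies on any compact piece avoiding $\Sigma'$. Letting $\lambda\to\infty$ and using the compactness theory for Brakke flows, together with uniqueness/nonfattening for mean-convex flow (with the prescribed boundary motion), produces the weak flow $t\mapsto M_t$, the continuity of the arrival-time function $u$, disjointness of the $M_t\setminus\Gamma_t$, rectifiability, and the Brakke-flow property. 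Items (1) (dimension of $X$), (2) and (3) (partial regularity and smooth convergence in $t$) then follow from the same stratification and local-regularity results (\cite{White-Size}, \cite{White-Local}, \cite{Brakke}) as in the closed case, once one knows all singularities have convex type; and item (4) is the standard fact that a mean-convex flow with boundary converging to $\Gamma_\infty$ has $M_t$ converging to a minimal variety $M_\infty$ spanning $\Gamma_\infty$, smooth off a set of dimension $\le n-8$ by Federer dimension reduction for area-minimizing/stable minimal hypersurfaces.

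To get convex type of the interior singularities, I would run the argument of Theorem~\ref{MainTheorem} verbatim in $W\times\RR$: set $U_\lambda(x,y)=(f_\lambda(x)-y)/\lambda$ and $U(x,y)=u(x)$, observe $\kappa_i(U_\lambda,(x,y))$ is $y$-independent, apply the corollary to the maximum principle on $N_\lambda\cap(K\times\RR)$ for a compact $K\subset W$ whose interior contains $X$ (e.g.\ $K=\{x: \eps\le u(x)\}$ intersected with a collar-free region, chosen so that $\partial K$ consists of a level set $\{u=\eps\}$ together with a piece near $\Gamma_\infty\subset\Sigma'$), pass to the limit using smooth convergence $U_\lambda\to U$ off $X$ from the local regularity theorem, and use $h(U,(q,0))=h(u,q)$, $\kappa_1(U,(q,0))=0\wedge\kappa_1(u,q)$ to conclude $\kappa_1(u,p)/h(u,p)\ge 0\wedge\min_{\partial K}\kappa_1(u,\cdot)/h(u,\cdot)$ for interior regular points $p$. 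By \cite{White-Nature}, a finite lower bound for $\kappa_1/h$ near an interior singular point forces it to be of convex type, giving (2) and (3).

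The main obstacle is the boundary. Unlike the closed case, the translating solutions $N_\lambda$ now have their own boundary $\bigcup_t[\Gamma_t]\times\{\lambda t\}$ sitting inside $\Sigma'\times\RR$, and one must (i) verify that the elliptic-regularization minimizer is genuinely a graph over $\overline{W}\setminus\Sigma'$ meeting $\Sigma'$ in the prescribed moving curve --- this is where the angle hypothesis in~\eqref{W-convex} and the monotonicity~\eqref{monotonic} are essential, and where a barrier/maximum-principle argument replaces the clean situation of a closed mean-convex hypersurface; (ii) control the behavior near $\Gamma_t$ uniformly in $\lambda$ so that the limit flow has the right boundary and the arrival-time function $u$ is continuous up to $\Gamma_t$; and (iii) choose the compact set $K$ in the maximum-principle step so that $\partial K$ stays a bounded distance from the problematic part of $\Sigma'$, so that $\min_{\partial K}\kappa_1/h$ is a genuine finite constant --- the point being that we only claim convex type for \emph{interior} singularities, not for singularities that might form on the moving boundary $\Gamma_t$. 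Once these boundary issues are handled, every other step is an exact transcription of the closed-manifold argument above.
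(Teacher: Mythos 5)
There are two genuine problems with your proposal. First, you have largely proved the wrong theorem: Theorem~\ref{BoundaryExistence} asserts only existence, uniqueness (nonfattening), continuity of $u$, and the regularity structure (1)--(4); it says nothing about convex type or tangent flows. Those are the content of Theorems~\ref{LowDimensionalBoundaries} and~\ref{MainBoundaryTheorem}, and your derivation of items (1)--(3) ``once one knows all singularities have convex type'' inverts the paper's logic and is in fact untenable: for $n\ge 8$ convex type requires the additional hypothesis that $M_\infty$ is smooth (hypothesis \thetag{*} of Theorem~\ref{MainBoundaryTheorem}), which is \emph{not} assumed in Theorem~\ref{BoundaryExistence}. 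The dimension bound on $X$ and the interior partial regularity must come directly from the local arguments of \cite{White-Size} (which, as the paper notes, are local and hence apply away from the $\Gamma_t$), not from convexity of the singularities. The paper's actual proof is short: existence of the level-set flow with boundary is quoted from \cite{White-Topology}, nonfattening follows from hypotheses \eqref{Wpiecewise}--\eqref{GammaInfinity} just as mean convexity gives nonfattening in the closed case, the Brakke-flow property from \cite{Ilmanen-Elliptic}, and interior regularity and the $t\to\infty$ behavior from \cite{White-Size}.

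Second, and more seriously, you do not establish the boundary regularity claim, which is the genuinely new assertion here: that $X$ is a \emph{compact subset of $W$}, i.e., that no singularities form at, or accumulate to, the moving boundary $\Gamma_t$, and that each $M_t\setminus X$ is smooth up to its boundary $\Gamma_t$. You explicitly sidestep this by choosing $K$ away from $\Sigma'$ and remarking that you ``only claim convex type for interior singularities, not for singularities that might form on the moving boundary'' --- but the theorem rules such boundary singularities out. A barrier or graph argument for the regularized translators $N_\lambda$ near $\tilde S_\lambda$ does not by itself survive the limit $\lambda\to\infty$ to give boundary regularity of the limiting Brakke flow; the paper instead invokes a separate general boundary regularity theorem for mean curvature flow \cite{White-Boundary} (which requires only that $M_0$ lie in a mean convex region whose boundary contains the $\Gamma_t$'s). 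Without that ingredient, or a substitute for it, your argument leaves the key new assertion of the theorem unproved.
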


Here, ``there is a unique weak solution'' is somewhat informal.
The precise statement is: the level-set flow $t\in[0,\infty)\mapsto M_t$ generated by $\Sigma$ and  $t\mapsto \Gamma_t$ is not fattening. (In other words,
the interior of each $M_t$ is empty.)  See~\cite{White-Topology} for level-set
flow of surfaces with boundary.

\begin{proof}
Existence and uniqueness (i.e., non-fattening) can be proved almost exactly as in the boundariless case.  (More precisely, \cite{White-Topology} shows that the level set flow $t\in[0,\infty)\mapsto M_t$ exists
even without any of our special hypotheses~(1)--(6).  However, those hypotheses
imply nonfattening, just as mean convexity of $M_0$ implies nonfattening
in the boundariless case.)
The fact that nonfattening implies that the multiplicity-one varifolds form
a brakke flow is proved in \cite{Ilmanen-Elliptic}.
The interior regularity and the interior behavior at $t\to\infty$ are proved 
in~\cite{White-Size}.
(The results in~\cite{White-Size} are stated for $M_t$ without boundary, but the 
proofs are local and so also work here away from the boundaries $\Gamma_t$.)

The boundary regularity (the statement that $X$ is a compact subset of $W$ and thus is bounded away
from the $\Gamma_t$'s) is a special case of a very general boundary regular result for mean curvature flow
for hypersurfaces~\cite{White-Boundary}.  
(The general result does not require that $M_0$ be mean convex or that
the boundary motion be monotonic.  It is enough that
$M_0$ is contained in a mean convex region whose boundary contains the $\Gamma_t$'s.)
\end{proof}

\begin{theorem}\label{LowDimensionalBoundaries}
Let $t\mapsto M_t$, $u$ and $X$ be as in theorem~\ref{BoundaryExistence}.
If $n<8$, then the singularities of the flow all have convex type.
\end{theorem}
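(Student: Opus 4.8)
The plan is to reduce Theorem~\ref{LowDimensionalBoundaries} to the closed case already recorded in Theorem~\ref{SummaryTheorem}(1), by observing that every singularity of the boundary flow is, locally, a singularity of an ordinary mean-convex mean curvature flow. The essential input is Theorem~\ref{BoundaryExistence}: the singular set $X$ is a \emph{compact} subset of $W$, hence a positive distance from $\partial W=\Sigma\cup\Sigma'$, and in particular bounded away from the initial surface $M_0=\Sigma$ and from every moving boundary $\Gamma_t\subset\Sigma'$. Also $u>0$ on $X$, since $M_0=\Sigma$ is disjoint from $W$. Consequently each $p\in X$ has a spacetime neighborhood $V\subset W\times(0,\infty)$, disjoint from $M_0$ and from all the $\Gamma_t$, in which $t\mapsto M_t$ is an ordinary (boundariless) mean curvature flow; it is moreover mean convex there, the sub-level sets $\{u\le t\}\cap V$ being nested and the mean curvature being positive at every regular point, as the standing hypotheses~\eqref{Wpiecewise}--\eqref{GammaInfinity} were designed to ensure (see the start of \S\ref{BoundarySection}). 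By Theorem~\ref{BoundaryExistence} the flow is also a non-fattening, multiplicity-one Brakke flow in $V$.

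Now I would simply run the argument of \cite{White-Nature} that proves Theorem~\ref{SummaryTheorem}(1), inside $V$. As recalled in the introduction, for $n<8$ that argument establishes a lower bound $\kappa_1/h\ge\alpha$ on the regular points near a singularity of a mean-convex flow, and \cite{White-Nature} shows that such a lower bound forces the singularity to be of convex type. The mechanism is the exclusion of nonflat minimal cones as tangent flows: a static tangent flow at a mean-convex singularity is the boundary of a one-sided region swept out by the flow, hence a one-sided minimizing minimal hypersurface, and there are no nonflat one-sided minimizing minimal hypercones in $\RR^k$ for $k<8$ (nor any obtained from such a cone by splitting off Euclidean factors); so the tangent flows at $p$ are shrinking spheres or cylinders, on which $\kappa_1/h\ge0$, and a blow-up argument propagates this to a lower bound near $p$. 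Every step here is a local assertion about the flow in an arbitrarily small spacetime neighborhood of the singular point, so running it in $V$ shows that $p$ satisfies conditions (2) and (3) in the definition of convex type. As $p\in X$ was arbitrary, all singularities of $t\mapsto M_t$ have convex type.

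The one point that genuinely needs care — the main obstacle — is the verification that the interior regularity and tangent-flow theory of \cite{White-Size} and \cite{White-Nature} really does transfer to $t\mapsto M_t$ restricted to $V$, despite $t\mapsto M_t$ being globally a flow with a prescribed moving boundary. This is the same observation used in the proof of Theorem~\ref{BoundaryExistence}: those results ``are stated for $M_t$ without boundary, but the proofs are local and so also work here away from the boundaries $\Gamma_t$.'' Concretely one checks that the flow in $V$ satisfies the purely local hypotheses those arguments need — nestedness of the sub-level sets, positivity of $H$, non-fattening, multiplicity-one Brakke flow, unit regularity — each of which is among the conclusions of Theorem~\ref{BoundaryExistence}. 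Finally, I would note why the hypothesis $n<8$ enters: in contrast to Theorem~\ref{MainTheorem}, the elliptic-regularization route is unavailable here, since $\overline{W}$ need not be flat and the boundary is prescribed and moving, so there is no translator-based maximum principle for $\kappa_1/h$; the dimension restriction is precisely what allows nonflat minimal cones to be excluded directly.
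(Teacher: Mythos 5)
Your proposal is correct and matches the paper's own (one-line) proof: the paper likewise observes that for $n<8$ the arguments of \cite{White-Nature} are local, so they apply verbatim away from the moving boundary, where Theorem~\ref{BoundaryExistence} places all singularities. Your additional elaboration of why the localization is legitimate and why $n<8$ enters is accurate but not a different route.
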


\begin{proof}
The proof is the same as in the boundariless case~\cite{white-nature}, 
since, for dimensions $n<8$, the arguments are local.
\end{proof}

\begin{theorem}\label{MainBoundaryTheorem}
Let $t\mapsto M_t$, $u$ and $X$ be as in theorem~\ref{BoundaryExistence}.
Suppose that $W$ is an open subset of $\RR^n$ and that
\[
\text{the surface $M_\infty$ is smooth.} \tag{*}
\]
Then the singularities of the flow all have convex type.
\end{theorem}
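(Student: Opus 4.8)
The plan is to follow the proof of Theorem~\ref{MainTheorem} almost verbatim; the one genuinely new ingredient is the choice of the compact set to which the maximum principle is applied. By~\cite{White-Nature}, a singular point has convex type as soon as $\kappa_1(u,\cdot)/h(u,\cdot)$ is bounded below on the regular points in some neighborhood of that point. Since the singular set $X$ is compact, it therefore suffices to produce a single compact set $K$, with $X\subset\operatorname{int}K$, and a constant $\alpha_K\in\RR$, such that $\kappa_1(u,x)/h(u,x)\ge\alpha_K$ for every regular point $x\in K$.

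The first step is to fix $K$, and this is where hypothesis~\thetag{*} is used. By Theorem~\ref{BoundaryExistence}, $X$ is a compact subset of the open region $W$, hence is bounded away from $\partial W=\Sigma\cup\Sigma'$, and in particular from every $\Gamma_t$. By~\thetag{*} together with part~(4) of Theorem~\ref{BoundaryExistence}, $X$ is moreover disjoint from $M_\infty$; since a point of the open unswept region is not a limit of regular points of the flow, it follows that $X$ lies in the open set $W\cap\{u<\infty\}$. I would then choose a compact region $K$ with smooth boundary such that
\[
   X\ \subset\ \operatorname{int}K\ \subset\ K\ \subset\ W\cap\{u<\infty\}.
\]
Then $\partial K$ consists of regular points of the flow, and is bounded away from $X$, from $M_\infty$, and from $\partial W$ (hence from all the $\Gamma_t$).

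The second step is to run Ilmanen's elliptic regularization just as in the proof of Theorem~\ref{MainTheorem}. For each $\lambda>0$ one takes the translator $N_\lambda\subset\overline W\times\RR$ with velocity $-\lambda\ee_{n+1}$ associated to the boundary flow: it is the graph of a smooth function $f_\lambda$ over the swept region, with $f_\lambda=0$ on $\Sigma$ and, on the part of $\Sigma'$ swept by the $\Gamma_t$, with $f_\lambda(\gamma)=\lambda\,\tau(\gamma)$, where $\tau(\gamma)$ is the time at which $\Gamma_t$ reaches $\gamma$; thus $\partial N_\lambda$ lies over $\partial W$, and $f_\lambda$ solves the Euler--Lagrange equation~\eqref{euler-lagrange}. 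As $\lambda\to\infty$ the flows $t\mapsto(N_\lambda)_t=\operatorname{graph}(f_\lambda-\lambda t)$ converge to $t\mapsto M_t\times\RR$, and $U_\lambda(x,y)=(f_\lambda(x)-y)/\lambda$ converges uniformly to $U(x,y)=u(x)$. Because $K\subset W$ while $\partial N_\lambda$ lies over $\partial W$, the hypersurface $N_\lambda\cap(K\times\RR)=\operatorname{graph}(f_\lambda|_K)$ is compact, smooth, and mean convex, with boundary $N_\lambda\cap(\partial K\times\RR)$; applying the corollary to Theorem~\ref{MaximumPrinciple} to it and using that each $\kappa_i(U_\lambda,(x,y))$ is independent of $y$ gives the analogue of~\eqref{TranslatorInequality}. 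Since $\partial K$ avoids $X$ and the boundary, the limit flow $t\mapsto M_t\times\RR$ is smooth and embedded near $\partial K\times\{0\}$, so by the local regularity theorem of~\cite{White-Local} (or by~\cite{Brakke}) the convergence $U_\lambda\to U$ is smooth there; letting $\lambda\to\infty$ and invoking the identities $h(U,(q,0))=h(u,q)$ and $\kappa_1(U,(q,0))=0\wedge\kappa_1(u,q)$ of~\eqref{hU=hu}--\eqref{kU-ku} then yields, for every regular $p\in K$,
\[
   \frac{\kappa_1(u,p)}{h(u,p)}\ \ge\ 0\wedge\min_{\partial K}\frac{\kappa_1(u,\cdot)}{h(u,\cdot)},
\]
whose right-hand side is finite because $\partial K$ is compact and consists of regular points. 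This is the desired bound, so every singularity has convex type.

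I expect the only real obstacle to be the choice of $K$: without~\thetag{*} the singular set could accumulate on a singular $M_\infty$, and then there would be no compact $K\subset W\cap\{u<\infty\}$ containing $X$ in its interior, so the argument could not even begin (indeed, $\kappa_1/h$ need not be bounded below near a singular $M_\infty$). The only other point requiring attention is that Ilmanen's elliptic regularization goes through in the moving-boundary setting --- that the weighted-area minimizers $N_\lambda$, now having the spacetime track of $t\mapsto\Gamma_t$ as part of their boundary data, are graphs with the stated limiting behavior --- and this is handled exactly as in the boundariless case of~\cite{Ilmanen-Elliptic}, using the regularity theory already assembled for Theorem~\ref{BoundaryExistence} (see also~\cite{White-Topology}).
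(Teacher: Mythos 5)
Your proposal is correct and follows essentially the same route as the paper: reduce to a lower bound for $\kappa_1/h$ on a compact $K$ with $X\subset\operatorname{int}K\subset\{u<\infty\}$ (which is exactly where \thetag{*} enters), then run elliptic regularization with the spacetime track of $t\mapsto\Gamma_t$ as boundary data and repeat the argument of Theorem~\ref{MainTheorem} verbatim. The one detail you gloss over is that prescribing $f_\lambda=\lambda\tau$ on the entire swept part of $\Sigma'$ gives unbounded boundary data when the boundary moves for all time; the paper instead truncates at a time $T(\lambda)$ with $T(\lambda)/\lambda\to\infty$, caps off with $M_{T(\lambda)}\times\{\lambda T(\lambda)\}$, and rounds the corners to get a smooth compact boundary $\tilde S_\lambda$ for the minimization problem.
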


Although the assumption \thetag{*} is certainly unpleasant, it is not as unpleasant as it first appears.
We will indicate below how it can be by-passed in many situations.

\begin{proof}
Let 
\[
  \Omega = \{x\in W: 0<u(x)<\infty\} = \cup_{0<t<\infty}(M_t\setminus \Gamma_t).
\]
As in the the proof of theorem~\ref{MainTheorem},
 it suffices to prove that for every compact set $K$ in $\Omega$, there
is a $\mu=\mu_K\in\RR$ such
that
\[
   \frac{\kappa_1(u,x)}{h(u,x)} \ge \mu
\]
for every regular point $x\in K$.  

The regularity of $M_\infty$ implies that $M_t$ converges smoothly $M_\infty$ as $t\to\infty$ for 
as $t\to\infty$ and thus that $X$ is a compact subset of $\{x: u(x)<\infty\}$.   
Hence by enlarging $K$, we may assume that its interior contains $X$ and thus that $\partial K$
consists entirely of regular points.

Define $U:\Omega\times \RR\to [0,\infty)$ by
\[
 U(x,y) = u(x).
\]
Then of course 
\[
  t\in [0,\infty)\mapsto \overline{U^{-1}(t)} = M_t\times \RR
\]
is a mean curvature flow in $\RR^n\times \RR$ with moving boundary $\Gamma_t\times \RR$.

Next we use the fact that the flow $t\mapsto M_t$ can be constructed by elliptic regularization.

For $\lambda>0$ and for $0<T<\infty$, let
\[
   S_\lambda^T 
   = 
   (M_0\times\{0\}) \cup (\cup_{t\in [0,T]} (\Gamma_t \times \{\lambda t\}) \cup (M_T\times \{\lambda T\}).
\]
and let 
\[
   S_\lambda = S_\lambda^{T(\lambda)}
\]
where $T(\lambda)$ is any function of $\lambda$ such that $T(\lambda)/\lambda\to \infty$
as $\lambda\to \infty$. (For example, one can let $T(\lambda)=\lambda^2$.)

Note that $S_\lambda$ is only piecewise smooth; it has corners along $\Gamma_0\times\{0\}$
and $\Gamma_{T(\lambda)}\times \{T(\lambda)\}$.  Those corners are slightly inconvenient.
So let $\tilde S_\lambda \subset \partial W$ be a smooth manifold obtained from 
 $S_\lambda$ by
rounding the corners in such a way that:
\begin{enumerate}
\item $\tilde S_\lambda$ and $S_\lambda$ coincide except in a tubular neighborhood
of radius $1/\lambda$ of the corners of $S_\lambda$,
\item The principal curvatures of the $\tilde S_\lambda$ are bounded by $C/\lambda$ for
some constant $C$.
\item $\tilde S_\lambda$ is a generalized graph over $\partial W$ in the following sense:
 for each $x\in \partial W$, the intersection of $\{x\}\times \RR$ with $\tilde S_\lambda$
 consists of either a single point or a line segment.
\end{enumerate}
Then there is a unique surface $N_\lambda$ in $\overline{\Omega\times\RR}$ that minimizes the weighted area
\[
   \int_{N_\lambda} e^{-\lambda x_{n+1}} \, dA(x)
\]
subject to $\partial (N_\lambda)=\tilde S_\lambda$.     
Standard arguments (see the appendix~{solition-appendix} for details)
 show that $N_\lambda$ is in fact a smooth manifold with boundary
(the boundary being $\tilde S_\lambda$), 
and that $N_\lambda\setminus \partial N_\lambda$ is the graph a smooth
function $F_\lambda: W\to [0,\infty)$.

As in the proof of theorem~\ref{MainTheorem}, the surface $N_\lambda$ translates with 
velocity $-\lambda\ee_{n+1}$, so that
\[
  t\in \RR \mapsto N_\lambda-  \lambda t\,\ee_{n+1}
\]
is a mean curvature flow.  The corresponding time-of-arrival function
$U_\lambda: W\times \RR \to \RR$ is
\[
   U_\lambda(x,y) = \frac{F_\lambda(x) - y}{\lambda}.
 \]
Furthermore, $U_\lambda$ converges uniformly to $U$ on compact subsets of 
  $\Omega\times \RR$.
The rest of the proof is exactly the same as the proof of theorem~\ref{MainTheorem}.
\end{proof}

The following corollary suggests that the assumption \thetag{*} is not
as troublesome as it might seem:

\begin{corollary}
Suppose that $W$, $\Gamma_t$, and $M_t$ are as in theorem~\ref{MainBoundaryTheorem}, but
without the requirement that $M_\infty$ be smooth.
Let $T\in (0,\infty)$, and suppose
 that the boundary
motion $t\mapsto \Gamma_t$ can be redefined for $t\ge T$ to
get a smooth, monotonic boundary motion $t\mapsto \Gamma'_t$
such that for the corresponding flow $t\mapsto M'_t$, the limit surface
$M'_\infty$ is smooth.  Then the singularities of the original flow up to time $T$
all have convex type.
\end{corollary}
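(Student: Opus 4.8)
The plan is to apply theorem~\ref{MainBoundaryTheorem} to the flow $t\mapsto M'_t$ generated by the modified boundary motion $t\mapsto\Gamma'_t$ and then transfer its conclusion back to the original flow for times $\le T$. Since $t\mapsto\Gamma'_t$ is smooth and monotonic and the limit surface $M'_\infty$ is smooth, theorem~\ref{MainBoundaryTheorem} does apply to the primed flow and tells us that \emph{all} of its singularities have convex type; the only work is the transfer, and the only real content of the transfer is that the two flows coincide for $t\le T$.

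First I would check that the modification is in fact forced to agree with the original data on all of $[0,T]$, not merely on $[0,T)$. By hypothesis $\Gamma'_t=\Gamma_t$ for $t<T$, and then the monotonicity hypothesis~\eqref{monotonic} applied at time $T$, together with smoothness of $t\mapsto\Gamma'_t$, forces $\Gamma'_T=\Gamma_T$, since each of them equals the boundary of $\Sigma'\setminus\bigcup_{t\le T}\Gamma_t$. Because the weak (level-set) flow is causal --- its restriction to $[0,t]$ depends only on the boundary data on $[0,t]$ --- and is unique by nonfattening (theorem~\ref{BoundaryExistence}), it follows that $M'_t=M_t$ for every $t\in[0,T]$. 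Hence $u$ and $u'$ agree on $\{u\le T\}$; moreover near any regular point $x$ with $u(x)\le T$ they agree on a full neighbourhood (for $u(x)<T$ this is immediate from continuity of $u$, while for $u(x)=T$ it follows from short-time uniqueness of smooth mean curvature flow, both flows being smooth near $(x,T)$ with the same smooth time slice $M_T=M'_T$), so $\kappa_1(u,x)/h(u,x)=\kappa_1(u',x)/h(u',x)$ there.

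With this in hand the conclusion is essentially automatic. As in the proof of theorem~\ref{MainTheorem}, theorem~\ref{MainBoundaryTheorem} applied to the primed flow yields, for every compact $K\subset\{0<u'<\infty\}$, a bound $\kappa_1(u',\cdot)/h(u',\cdot)\ge\mu_K$ on the regular points of $K$. Choosing $K$ large enough that its interior contains the part of the primed singular set with arrival time $\le T$ and using the previous paragraph, $\kappa_1(u,\cdot)/h(u,\cdot)$ is bounded below on the regular points of the original flow that lie near a given original singular point and have arrival time $\le T$; by the truncated form of the reduction in \cite{White-Nature} (cf.\ part~(2) of theorem~\ref{SummaryTheorem}) this is exactly what is needed for every singularity of the original flow with arrival time $\le T$ to have convex type. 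The tangent-flow assertion at a time-$T$ singularity is included, and is in any case transparent: that tangent flow is determined, via Huisken's monotonicity, by the slices $M_t=M'_t$ with $t<T$ alone, hence is a shrinking sphere or cylinder by theorem~\ref{MainBoundaryTheorem} applied to the primed flow.

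I do not expect a genuine obstacle: the corollary is a soft consequence of theorem~\ref{MainBoundaryTheorem}. The two points that require a little care are the bookkeeping showing that monotonicity pins $\Gamma'_t$ down to $\Gamma_t$ on all of $[0,T]$ (so that uniqueness of the weak flow may legitimately be invoked), and the behaviour at time exactly $T$: one cannot control the original flow after time $T$, so the conclusion must be, and is, understood as convex type ``up to time $T$'' in the one-sided sense of part~(2) of theorem~\ref{SummaryTheorem}.
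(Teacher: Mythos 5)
Your proposal is correct and follows exactly the paper's route: apply theorem~\ref{MainBoundaryTheorem} to the modified flow $t\mapsto M'_t$ and observe that the two flows coincide up to time $T$, so the conclusion transfers. The paper's own proof is just these two sentences; your additional bookkeeping (monotonicity pinning down $\Gamma'_T$, uniqueness of the weak flow, the one-sided interpretation at time exactly $T$) is a sound elaboration of the same argument rather than a different one.
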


\begin{proof}
The singularities of the new flow $t\mapsto M'_t$ all have convex type
by theorem~\ref{MainBoundaryTheorem}.  But the old flow coincides with the new one up to time $T$.
\end{proof}

\begin{theorem}\label{BallCase}
Suppose that $W$, $\Sigma=M_0$, $\Gamma_t$, and $M_t$ are as in theorem~\ref{MainBoundaryTheorem}, but
without the requirement that $M_\infty$ be smooth.
Suppose that each connected component of $\Sigma'$ is diffeomorphic to a ball.
Then the singularities of the flow have convex type at all finite times.
\end{theorem}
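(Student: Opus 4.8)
The plan is to deduce the theorem from the corollary to Theorem~\ref{MainBoundaryTheorem}. Fix $T\in(0,\infty)$. By that corollary it suffices to produce a smooth, monotonic boundary motion $t\mapsto\Gamma'_t$, satisfying the hypotheses~\eqref{Wpiecewise}--\eqref{GammaInfinity}, which agrees with the given motion $t\mapsto\Gamma_t$ on $[0,T]$ and for which the corresponding flow $t\mapsto M'_t$ has a \emph{smooth} limit surface $M'_\infty$: the corollary then gives that the singularities of the original flow up to time $T$ have convex type, and since $T$ is arbitrary and every finite-time singularity is a singularity of the flow truncated at a large enough $T$, the theorem follows.

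First I would build the extension. At time $T$ the part of $\Sigma'$ not yet swept out, namely $\Sigma'\setminus\bigcup_{t\le T}\Gamma_t$, is an open set whose closure meets each component of $\Sigma'$ in a smooth compact ball (a ball with an open collar of its boundary deleted is again a ball) with boundary the corresponding component of $\Gamma_T$. On each such ball I would pick a smooth function with no critical points on the boundary and exactly one interior critical point, a minimum; its level sets foliate the ball by embedded $(n-2)$-spheres running from $\Gamma_T$ down to the interior minimum point. Reparametrizing in $t$ and smoothing the junction so that $t\mapsto\Gamma'_t$ has matching derivatives at $t=T$ produces the desired family on $[0,\infty)$, monotonic in the sense of~\eqref{monotonic}, with $\Gamma'_\infty$ equal to the finite set of collapse points (one per component of $\Sigma'$); alternatively one may arrange the collapse to happen at a finite time and set $\Gamma'_t=\emptyset$ thereafter.

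Next I would identify the limit surface. By construction $\bigcup_{t\ge0}\Gamma'_t$ fills up $\Sigma'$ except for the finitely many collapse points, so the monotone family $M'_t$ (whose surfaces are disjoint and whose time-of-arrival function $u'$ is continuous by Theorem~\ref{BoundaryExistence}) sweeps out all of $\overline{W}$ except that finite set; hence $u'<\infty$ off a finite set, and the limit $M'_\infty$, being contained in that set, carries no $(n-1)$-dimensional mass, i.e.\ is the empty surface (equivalently the zero varifold). An empty surface is vacuously smooth, so the hypothesis of the corollary is verified and the singularities of the original flow before time $T$ have convex type, as desired.

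The step I expect to be the main obstacle is the construction and bookkeeping of the extension: one must splice $t\mapsto\Gamma'_t$ onto $t\mapsto\Gamma_t$ at $t=T$ so as to keep both the smoothness required in~\eqref{GammaT} and the monotonicity required in~\eqref{monotonic}, and one must make sense of the ``smooth convergence'' demanded by~\eqref{GammaInfinity} when the $\Gamma'_t$ are $(n-2)$-spheres collapsing to a point---either by reading $\Gamma'_\infty$ as that point, a zero-dimensional embedded submanifold, or by forcing finite-time extinction so that $\Gamma'_\infty=\emptyset$. Once the extension is in hand, the claim that exhausting $\Sigma'$ forces the flow to vacate $W$, and hence that $M'_\infty$ is empty and smooth, is routine, and the corollary to Theorem~\ref{MainBoundaryTheorem} supplies the conclusion.
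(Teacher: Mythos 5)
Your overall strategy is the paper's: fix $T$, extend the boundary motion past time $T$ by shrinking the unswept ball $\Sigma^*=\Sigma'\setminus\cup_{t\le T}\Gamma_t$ through a foliation by $(n-2)$-spheres, and invoke the corollary to Theorem~\ref{MainBoundaryTheorem}. The gap is in the endgame. First, collapsing the $\Gamma'_t$ to a point (or declaring them empty after a finite time) does not produce a boundary motion satisfying hypotheses~\eqref{GammaT}--\eqref{GammaInfinity}: the second fundamental forms of the shrinking spheres blow up, and an $(n-2)$-sphere does not converge \emph{smoothly} to a $0$-dimensional submanifold, while finite-time extinction is not a smooth family at all. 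Every result you are quoting --- Theorem~\ref{BoundaryExistence}, the boundary regularity keeping $X$ away from the $\Gamma'_t$, and the corollary itself --- is stated only under those hypotheses, so you cannot apply any of them to your extension. You correctly identified this as the main obstacle, but neither of your two proposed readings resolves it.

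Second, and independently, the inference ``the $\Gamma'_t$ exhaust $\Sigma'$ minus a finite set, hence the $M'_t$ sweep out $\overline{W}$ minus a finite set, hence $M'_\infty=\emptyset$'' is not valid. By Theorem~\ref{BoundaryExistence}(4) the surfaces $M'_t$ converge to a stable minimal variety $M'_\infty$ with boundary $\Gamma'_\infty$, and there is no reason for this limit to be empty just because the boundary curves exhaust $\Sigma'$; the region of $W$ ``behind'' $M'_\infty$ is simply never reached. Indeed, in the paper's own construction the limit is a \emph{nonempty} minimal disk. The paper's fix is to stop the shrinking at a definite small curve rather than a point: choose $p$ in the interior of $\Sigma^*$ and a small neighborhood $U$ whose closure projects diffeomorphically onto a convex set $C\subset\Tan_p\Sigma^*$, and arrange $\Gamma'_\infty=\partial U$. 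Then $\partial U$ bounds exactly one minimal variety, namely a smooth minimal graph $V$ over $C$, so $M'_\infty=V$ is smooth; and since $\Sigma^*\setminus U\cong \BB^{n-2}\times[0,1]$, the interpolation from $\Gamma_T$ to $\partial U$ can be made smooth and monotonic, keeping all of~\eqref{Wpiecewise}--\eqref{GammaInfinity} intact. What is really needed, and what your argument omits, is this uniqueness-and-regularity statement for the minimal varieties bounded by the terminal curve; emptiness is the wrong target.
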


\begin{proof}
For notational simplicity, we assume that $\Sigma'$ has just one component.
Let $0<T<\infty$.   Note that $\Sigma^*:=\Sigma' \setminus \cup_{0\le t\le T}$ is diffeomorphic to a ball.
Let $p$ be a point in the interior of $\Sigma^*$ and let $U$ be a very small neighborhood of $p$ such
that $\overline{U}$ is contained in the interior of $\Sigma^*$ and such that $\overline{U}$ projects
diffeomorphically onto a convex subset $C$ of $\Tan_p\Sigma^*$.  The fact that 
 $\overline{U}$ projects
diffeomorphically onto  a convex region $C$ of a plane implies that $\partial U$ bounds a smooth
minimal surface $V$ (which is actually a graph over $C$) and no other minimal varieties.

Note that $\Sigma^*\setminus U$ is diffeomorphic to $\BB^{n-2}\times [0,1]$.  Thus the boundary
motion
\[
  t\in [0,T]\mapsto \Gamma_t
\]
can be extended to get a smooth, monotonic boundary motion
\[
  t\in [0,\infty] \mapsto \Gamma'_t
\]
such that $\Gamma'_\infty=\partial U$.  
Since $\Gamma'_\infty$ bounds a smooth
minimal surface $V$ and no other minimal varieties, in fact $M'_\infty=V$,
so by the corollary to theorem~\ref{MainBoundaryTheorem}, the singularities of the flow $t\mapsto M_t$
have convex-type up to time $T$.  Since $T$ is arbitrary, the singularities have 
convex type for all finite times.
\end{proof}

\appendix

\section{Translating Solitions}\label{solition-appendix}

Here we prove existence and regularity of the translating solitons $N_\lambda$ that
were used in the proofs of Theorems~\ref{MainTheorem} 
 and~\ref{MainBoundaryTheorem}.

\begin{lemma}\label{Hzero}
Let $M$ be a connected, oriented hypersurface such that the mean curvature $H$ is everywhere
a nonnegative multiple of the unit normal $\nu$ and such that $M$ translates with constant velocity $\vv\ne 0$
under mean curvature flow, i.e., such that
\[
    H\cdot \nu = \vv\cdot \nu.
\]
Then $H\cdot \nu$ has no local interior minimum unless $H\equiv 0$ (in which case, $M$ lies in $\Sigma\times L$
where $L$ is a line parallel to $\vv$ and where $\Sigma$ is a minimal hypersurface 
in $L^\perp.$)
\end{lemma}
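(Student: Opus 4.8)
The plan is to derive an elliptic PDE satisfied by the scalar mean curvature $H \cdot \nu$ on $M$, show it admits no interior minimum by the strong maximum principle unless $H\cdot\nu$ is constant, and then analyze the case where that constant is zero. First I would recall that since $M$ translates with velocity $\vv$, it is a minimal hypersurface with respect to the conformally flat metric $g_{ij} = e^{2\vv\cdot\xx/m}\delta_{ij}$ from Section~\ref{Translators}, where $m=\dim M$. Equivalently, $M$ satisfies the soliton equation $\vec H = \vv^\perp$, i.e.\ $H\cdot\nu = \vv\cdot\nu$ with $\vec H = (H\cdot\nu)\,\nu$. The quantity I want to control is $w := H\cdot\nu = \vv\cdot\nu$, which by hypothesis is $\ge 0$ everywhere.

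The key computation is the elliptic equation for $w$. Differentiating the identity $w = \vv \cdot \nu$ along $M$ and using the standard formula $\nabla_i \nu = -A_{ij}e_j$ for the Gauss map (where $A$ is the second fundamental form in the $\nu$ direction), one gets $\nabla_i w = -A_{ij}\,\vv^\top_j$, where $\vv^\top$ is the tangential part of $\vv$. Taking a second covariant derivative and contracting, together with the Simons-type identity for $\Delta A$ on a translating soliton (or, more cleanly, the fact that $w$ is the normal component of the position-independent Killing-type field $\vv$, combined with the Codazzi equations), yields an equation of the schematic form
\[
  \Delta_M w \;=\; \langle \vv^\top, \nabla w\rangle \;-\; |A|^2\, w,
\]
i.e.\ $w$ satisfies $\Delta_M w - \langle \vv^\top,\nabla w\rangle + |A|^2 w = 0$, a linear elliptic equation with no zeroth-order term of favorable sign but, crucially, of the form $Lw + c\,w = 0$ with $c = |A|^2 \ge 0$. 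I expect deriving this equation cleanly — getting the signs right and confirming the first-order term is exactly the drift $\vv^\top$ — to be the main technical obstacle; it is essentially the Jacobi/linearization identity for the translator equation, but one must be careful that $w$ is the \emph{full} mean curvature, not a perturbation.

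Granting that equation, the conclusion is the strong maximum principle. Since $w \ge 0$ and $w$ satisfies $\Delta_M w - \langle\vv^\top,\nabla w\rangle = -|A|^2 w \le 0$, the function $w$ is a nonnegative supersolution of the drift-Laplacian; so if $w$ attains the value $0$ at an interior point, then $w \equiv 0$ on (the connected manifold) $M$. More generally, if $w$ attains an interior \emph{local} minimum $w_0 > 0$, I would localize: on the open set where $w$ is close to $w_0$ the term $|A|^2 w \le |A|^2 w$ is bounded, and one applies the strong maximum principle for $Lw + cw = 0$ with $c \ge 0$ bounded (Hopf), concluding $w$ is locally constant, hence constant on $M$ by connectedness; then $|A|^2 w \equiv 0$, and since $w = w_0 > 0$ this forces $|A| \equiv 0$, so $M$ is totally geodesic, whence $w = H\cdot\nu \equiv 0$, contradicting $w_0 > 0$. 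So in all cases an interior minimum forces $H \equiv 0$.

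Finally, in the case $H \equiv 0$: then $\vv\cdot\nu = H\cdot\nu \equiv 0$, so $\vv$ is everywhere tangent to $M$. The tangential vector field $\vv^\top = \vv$ is then a parallel vector field on $M$ (it is the restriction of a constant ambient field with no normal component, and $\nabla^M_X \vv = (\bar\nabla_X \vv)^\top = 0$), so $M$ splits locally, and globally if $M$ is complete, as $\Sigma \times L$ where $L$ is the line through the flow of $\vv$, i.e.\ a line parallel to $\vv$, and $\Sigma \subset L^\perp$; since $H\equiv 0$, $\Sigma$ is a minimal hypersurface in $L^\perp$. This gives the parenthetical statement and completes the proof.
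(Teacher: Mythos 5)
Your proposal is correct and is essentially the paper's argument: the paper simply cites the parabolic evolution equation $\partial_t(H\cdot\nu)=\Delta (H\cdot\nu)+|A|^2(H\cdot\nu)$ satisfied under mean curvature flow together with the strong maximum principle (referring to \cite{White-Nature}*{Theorem 2}), and on a translator that parabolic equation is exactly the elliptic drift equation you derive, so the two routes coincide. (Minor remarks: the sign of the drift term $\vv^\top\cdot\nabla w$ is irrelevant to the maximum principle, and for the positive-interior-minimum case the clean statement to invoke is the strong minimum principle for supersolutions of the zeroth-order-free operator $\Delta-\vv^\top\cdot\nabla$, which you already set up, rather than the Hopf principle with $c\ge 0$.)
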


\begin{proof}
This is because under mean curvature flow, $H\cdot \nu$ satisfies a nice second order parabolic equation.
The result then follows immediately from the strong maximum principle.  
See for example~\cite{white-nature}*{Theorem 2}.
\end{proof}

\begin{lemma}\label{avoidance}
Let $M_1$ and $M_2$ be two disjoint, compact, connected smooth hypersurfaces in $\RR^{n+1}$
such that $M_i$ translates by $\vv\ne 0$ under mean curvature flow.
If the function
\[
   (p_1,p_2)\in M_1\times M_2 \mapsto |p-q|
\]
achieves its minimum value $\mu$ at  $(p,q)$ for some interior points $p\in M_1$
and $q\in M_2$, 
then $M_1$ and $M_2$ lie in parallel hyperplanes, and the vector $\vv$ is parallel
to those planes.
\end{lemma}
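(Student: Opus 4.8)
The plan is to argue entirely at the level of the translator equation, without invoking the flow. Write $w := q-p$, so $\mu = |w|>0$, and set $e := w/\mu$. Since $(p,q)$ is an interior minimum of $(p_1,p_2)\mapsto |p_1-p_2|$, the first-order conditions say that $w$ is orthogonal both to $T_pM_1$ and to $T_qM_2$; in particular these tangent hyperplanes are parallel, so I may choose coordinates with $p=0$, $q=-\mu\,\ee_{n+1}$, and $T_pM_1=T_qM_2=\{x_{n+1}=0\}$. Near $p$ and $q$ the surfaces are then graphs $M_1=\{(x',g(x'))\}$ and $M_2=\{(x',-\mu+h(x'))\}$ over a common small ball, with $g(0)=h(0)=0$ and $\nabla g(0)=\nabla h(0)=0$. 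Let $\widetilde M_2:=M_2+\mu\,\ee_{n+1}$ be the translate of $M_2$ through $p$; near $p$ it is the graph of the same function $h$, and it is again a translator with velocity $\vv$.

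The key step is a one-sidedness estimate. For $x'$ small, $a:=(x',g(x'))\in M_1$ and $b:=(x',-\mu+h(x'))\in M_2$, so $\mu^2\le|a-b|^2=(g(x')-h(x')+\mu)^2$; since $g$ and $h$ are small near $0$ by continuity, $g(x')-h(x')+\mu$ is positive, hence $g(x')-h(x')+\mu\ge\mu$, i.e.\ $g\ge h$ near the origin. Thus $M_1$ lies weakly above $\widetilde M_2$ near $p$ and the two touch at $p$ with a common tangent plane. The graphical translator equation for velocity $\vv$ involves the unknown only through its gradient and Hessian, so $v:=g-h\ge 0$ satisfies a linear second-order elliptic equation with no zeroth-order term, and $v(0)=0$ is an interior minimum. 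By the strong maximum principle, $v\equiv 0$ near the origin; that is, $M_1$ and $\widetilde M_2$ coincide in a neighborhood of $p$.

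Next I would globalize. Graphs solving the translator equation are real-analytic, so the set of points of the connected manifold $M_1$ near which $M_1$ agrees with $\widetilde M_2$ is nonempty, open, and — by analytic continuation, using compactness of $\widetilde M_2$ to place limit points on $\widetilde M_2$, and continuity up to $\partial M_1$ — closed; hence $M_1\subseteq\widetilde M_2=M_2+(p-q)$. Applying the same argument with $M_1$ and $M_2$ interchanged gives the opposite inclusion, so $M_1=M_2-w$. Now $\dist(M_1,M_2)=\mu=|w|$ together with $M_1=M_2-w$ shows that for \emph{every} $b\in M_2$ the pair $(b-w,b)\in M_1\times M_2$ realizes the minimal distance $\mu$; the first-order condition then gives $w\perp T_bM_2$ for all $b\in M_2$. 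Since $M_2$ is connected, its Gauss map is the constant $\pm w/|w|$, so $M_2$ lies in an affine hyperplane $P$ orthogonal to $w$, and $M_1=M_2-w$ lies in the parallel hyperplane $P-w$. Being a piece of a hyperplane, $M_2$ has $H\equiv 0$, so the equation $H=\vv^\perp$ forces $\vv^\perp\equiv 0$; that is, $\vv$ is tangent to $P$ (hence to $P-w$), which is the asserted conclusion.

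The hard part is the one-sidedness estimate together with its propagation: the estimate itself is short, but it must be coupled correctly with the Hopf maximum principle for the operator obtained by linearizing the (translation-invariant-in-the-unknown) translator operator, and the passage from local to global coincidence rests on real-analyticity of soliton graphs plus a connectedness argument that needs a little care near $\partial M_1$.
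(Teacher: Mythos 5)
Your local argument is sound and is the same mechanism as the paper's proof: the first-order conditions at the minimizing pair give a common tangent hyperplane; the distance inequality gives the one-sided comparison $g\ge h$ after translating $M_2$; and the strong maximum principle applies to $v=g-h$ because the graphical translator equation sees the unknown only through $Du$ and $D^2u$, so the difference of two solutions satisfies a linear elliptic equation with no zeroth-order term. The gap is in the endgame. Your route to planarity passes through the global identity $M_1=M_2-w$, obtained by an open-and-closed argument on the coincidence set. That set need not be closed in the interior of $M_1$: a limit point $x$ of the coincidence set can be an interior point of $M_1$ but a \emph{boundary} point of $\widetilde M_2=M_2-w$, in which case no unique-continuation statement applies and the argument stops. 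You flag that this "needs a little care near $\partial M_1$," but the surfaces here are compact with boundary (the lemma explicitly speaks of interior points), so this is a live case, not a technicality, and as written the inclusion $M_1\subseteq \widetilde M_2$ is not established.

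The gap is easy to close, and closing it shows the global coincidence is an unnecessary detour. Once $g\equiv h$ on a neighborhood $O$ of the origin, every pair $\bigl((x',g(x')),\,(x',g(x')-\mu)\bigr)$ with $x'\in O$ lies in $M_1\times M_2$ and realizes the minimal distance $\mu$; your own first-order argument then gives $w\perp T_aM_1$ at each such point $a$, i.e.\ $Dg\equiv 0$ on $O$. Hence $M_1$ (and likewise $M_2$) is planar on a nonempty open set, and since translators are real-analytic (minimal surfaces for a real-analytic conformal metric, per \S\ref{Translators}), the second fundamental form vanishes identically on each connected surface; global planarity and then $\vv^\perp\equiv 0$ follow as you say. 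This repaired version is essentially the paper's proof: there one keeps both surfaces as graphs $f_1,f_2$ over a hyperplane perpendicular to the segment $pq$, the strong maximum principle gives $f_2-f_1\equiv\mu$, and the observation that a constant vertical gap equal to the minimal Euclidean distance forces $Df_1=Df_2=0$ yields local planarity directly, with no need to identify $M_1$ with a translate of $M_2$.
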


\begin{proof}
Since the result is essentially local, we may assume that $M_1$ and $M_2$ are graphs
of functions $f_1$ and $f_2$ over the same domain $D$ in a hyperplane perpendicular to the line $p_1p_2$,
with $f_2>f_1$ at all points.  Note that $f_2-f_1$ has an local minimum.  Thus by the maximum principle,
it is constant: $f_2-f_1\equiv \mu$.   Consequently, for each point $x$ in the domain of the $f_i$,
we have
$f_2(x)-f_1(x)=\mu$, the minimum distance between the two graphs.  But that implies that $Df_2(x)=Df_1(x)=0$.
In other words, $f_1$ and $f_2$ must be constant.  Thus $M_1$ and $M_2$ are planar.
The fact that $\vv$ is parallel to the those planes follows immediately
 (by Lemma~\ref{Hzero}, for example.)
\end{proof}

\begin{theorem}
Let $n\ge 2$.
Let $W$ be a bounded region in an $\RR^n$ with piecewise smooth, mean convex boundary and 
let $\lambda\ge 0$.   Let $S$ be a smooth, closed $(n-1)$-manifold in $(\partial W)\times\RR$ that
is graph-like in the following sense: each line $\{x\}\times \RR$ with $x\in\partial W$ intersects
$S$ either in a point or in a line segment.

Then there is a smooth, compact $n$-manifold $N$ in $\overline{W}\times \RR$ such that
\begin{enumerate}
\item $\partial N=S$,
\item $N\setminus S$ is the graph of a smooth function $f: W\to \RR$,
\item $N$ translates with velocity $-\lambda\ee_{n+1}$.
\end{enumerate}
\end{theorem}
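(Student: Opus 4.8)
The plan is to realize $N$ as a minimizer of the weighted area $\mathcal F(T)=\int_T e^{-\lambda x_{n+1}}\,d\|T\|$ among flat chains mod~$2$ in $\overline W\times\RR$ with boundary $S$, and then to prove that this minimizer is a smooth graph by combining the regularity theory for such minimizers with a sliding argument. Write $a=\min_S x_{n+1}$ and $b=\max_S x_{n+1}$, so that $S\subset(\partial W)\times[a,b]$. The first step is to confine all admissible competitors to a fixed compact box $K=\overline W\times[a-1,b']$. Confinement to $\overline W\times\RR$ is forced by mean convexity of $\partial W$: a vertical cylinder $C\times\RR$ over a minimal hypersurface $C\subset\RR^n$ is itself a translator with velocity $-\lambda\ee_{n+1}$, so the mean-convex barrier principle keeps $\spt T$ inside $\overline W\times\RR$ and lets it meet $(\partial W)\times\RR$ only along $S$. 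The lower bound $\spt T\subset\overline W\times[a,\infty)$ holds because pushing the part of $T$ lying below $\{x_{n+1}=a\}$ onto that plane does not increase $\mathcal F$ (a constant is a lower barrier for the translator graph equation). For the upper bound I would compare with vertical translates of the entire, rotationally symmetric translating graph $\operatorname{graph}(\psi)$ with velocity $-\lambda\ee_{n+1}$, which is concave and tends to $-\infty$ at infinity: for $c$ large, $\operatorname{graph}(\psi)+c\,\ee_{n+1}$ lies strictly above $S$ over $\overline W$, and as $c$ decreases the strong maximum principle and avoidance principle for translators (Lemmas~\ref{Hzero} and~\ref{avoidance}) preclude an interior first contact with $N$, so $N$ lies on or below this barrier and $\spt N\subset\overline W\times[a,b']$ with $b'$ depending only on $\lambda$ and $\overline W$.

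Because every competitor lies in the compact set $K$, on which $e^{-\lambda x_{n+1}}$ is bounded between two positive constants (so $\mathcal F$ is comparable to area), the compactness theorem for flat chains mod~$2$ produces a minimizer $N$; the barrier arguments show $\spt N$ meets $\partial K$ only along $S$, so $N$ is a local $\mathcal F$-minimizer away from $S$. Since $e^{-\lambda x_{n+1}}$ is a smooth positive weight, $\mathcal F$ is a smooth elliptic parametric integrand, so the standard regularity theory gives that $\spt N\setminus S$ is a smooth embedded hypersurface away from a relatively closed singular set of dimension at most $n-7$, and the Euler--Lagrange equation is $H\cdot\nu=-\lambda\,\ee_{n+1}\cdot\nu$; by \S\ref{Translators} this says exactly that $N$ translates with velocity $-\lambda\ee_{n+1}$.

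The graph property comes from sliding vertically. The translate $N+c\,\ee_{n+1}$ is again $\mathcal F$-stationary (indeed $\mathcal F(N+c\,\ee_{n+1})=e^{-\lambda c}\,\mathcal F(N)$), it is disjoint from $N$ for $c$ large since $\spt N$ lies in a slab, and decreasing $c$ to the first value at which the two meet and invoking the strong maximum principle (valid even at singular contact points) forces them to agree on the component through the contact point, and hence to coincide — impossible for $c>0$ since $\spt N$ is compact. Thus every vertical line $\{x\}\times\RR$ with $x\in W$ meets $\spt N\setminus S$ in at most one point, and, since $S$ is compact and contained in $(\partial W)\times\RR$ (so $N$ has no vertical sheet over an interior point), a continuity argument shows it meets every such line in exactly one point; hence $\spt N\setminus S=\operatorname{graph}(f)$ for some $f\colon W\to\RR$. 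Being graphical and $\mathcal F$-minimizing removes the interior singular set: the tangent cones of $N$ at interior points are area-minimizing cones that are still graphs over the base hyperplane (the sliding property passes to the blow-ups), hence are graphs of $1$-homogeneous solutions of the minimal surface equation, hence linear by the interior regularity theorem for that equation, so each tangent cone is a hyperplane and Allard's regularity theorem makes $N$ a smooth graph; elliptic regularity then gives $f\in C^\infty(W)$.

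Finally I would treat the boundary: near $S$, boundary regularity for $\mathcal F$-minimizers (Allard's boundary regularity theorem and its minimizing refinement) applies, since $S$ is a smooth embedded $(n-1)$-manifold and the graph-like hypothesis forces every boundary tangent cone to be a multiplicity-one half-hyperplane; this shows $N$ is a smooth manifold-with-boundary with $\partial N=S$, which completes the proof. I expect the main obstacle to be exactly this passage from ``$\mathcal F$-minimizing current'' to ``smooth graph'': one must rule out interior and boundary singularities and push the sliding argument through an a priori only partially regular object, and the sliding must be carried out carefully over the (dimension $\le n-2$) subset of $\partial W$ above which $S$ contains genuine vertical segments, where $f$ remains bounded but $|\nabla f|$ blows up.
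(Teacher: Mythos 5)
Your overall architecture coincides with the paper's: minimize the weighted area between an upper barrier (vertical translates of the rotationally symmetric bowl translator) and a lower horizontal plane, get boundary regularity from Hardt--Simon (your Allard-type route would also need the multiplicity/density input, which Hardt--Simon supplies for free in codimension one), and obtain graphicality by sliding $N$ vertically and invoking an avoidance/maximum principle. Up to that point the proposal is sound. The genuine gap is in your removal of the \emph{interior} singular set. You assert that ``the sliding property passes to the blow-ups,'' so that every interior tangent cone is a graph over the horizontal hyperplane and hence a plane. That step is not justified: graphicality is not preserved under varifold blow-up unless one has a gradient bound. Without such a bound, a tangent cone at an interior singular point can perfectly well contain vertical lines; the sliding dichotomy at the level of cones is ``disjoint from its vertical translates \emph{or identical to them},'' and the second branch is realized by vertical cylinders $C'\times\RR$ over singular area-minimizing cones $C'\subset\RR^n$ (available for $n\ge 8$, e.g.\ the Simons cone times a line). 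These are exactly the configurations that have to be excluded, and your argument does not exclude them. You flag this passage as ``the main obstacle'' in your last sentence, but you do not close it.

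The paper closes it with a quantitative gradient estimate that your proposal is missing: for a compact $K\subset W$ with smooth boundary containing $\pi(X)$ in its interior, one shows
$\sup\{|Df(x)|:x\in K\setminus\pi(X)\}\le\sup\{|Df(x)|:x\in\partial K\}$,
by applying the avoidance lemma to $\operatorname{graph}(f|K)$ and $\operatorname{graph}((f+\eps)|K)$: if the interior gradient exceeded the boundary gradient, then for small $\eps$ the minimum distance between these two translators would be attained at a pair of interior points, which the avoidance lemma forbids. This makes $f$ locally Lipschitz; only then are the tangent cones graphs of Lipschitz $1$-homogeneous functions, and a Bernstein-type argument (dimension reduction plus the strong maximum principle applied at the point of the sphere where the homogeneous function is maximal) shows each such cone is a hyperplane, after which Allard/De Giorgi and Schauder give smoothness. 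You should insert this gradient estimate (or an equivalent one) before the tangent-cone analysis; as written, the step from ``minimizing graph'' to ``no interior singularities'' does not go through.
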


\begin{proof}
First we recall that there is an entire function $\phi: \RR^n\to \RR$ such
that $\phi$ is rotationally symmetric (i.e., such that $\phi(x)$ depends only on $|x|$)
and such that the graph of $\phi$ translates with velocity $-\lambda\ee_{n+1}$.
By adding a constant to $\phi$, we can assume that $S$ lies below the graph of $\phi$  
Let $b= \min \{y: (x,y)\in S\}$.  Let $R$ be the region
\[
   \{ (x,y) \in \overline{W}\times \RR:   b \le y \le \phi(x)\}.
\]
Now let $N$ be a variety (e.g. an integral current) such that $N$ minimizes the $\lambda$-area
among all varieties that have boundary $S$ and that lie in the region $R$.

By the maximum principle, $N\setminus S$ lies in the interior of $R$.
By~\cite{Hardt-Simon-Boundary}, $N$ is a smooth manifold with 
boundary in a neighborhood of $S$.
Thus the singular set $X$ is a compact subset of the interior of $R$.
Also, the singular set has Hausdorff dimension at most $n-7$.

Let $N(s)$ be the result of translating $N$ upward by distance $s$.

{\bf Claim 1}: for $s>0$, the interiors of $N$ and $N(s)$ are disjoint.

Proof of claim 1: Suppose not.  Let $s$ be the largest number such that $N$ and $N(s)$
intersect in the interior or are tangent to each other at a common boundary point.
By the boundary maximum principle, they cannot be tangent to each other at a common
boundary point.  Thus they touch at an one or more interior points.  Let $Z$ be the
set of interior points where they touch. Then $Z$ is a compact subset $W\times \RR$.
Now it follows that for sufficiently small $\eps>0$, the function
\[
   (p,q) \in N\times N(s+\eps) \mapsto |p-q|
\]
will have an interior local minimum $(p,q)$.   But Lemma~\ref{avoidance}
 then implies that $N$ and $N(s+\eps)$
lie in parallel vertical planes, which is clearly incompatible with the boundary conditions.
This completes the proof of claim 2.

Claim 1 implies that $N\setminus S$ is the graph of a continuous function $f: W\to \RR$.

It remains to show that the singular set $X$ is empty.

Let $K\subset W$ be a compact set with smooth boundary such that the singular set $X$
is contained in the interior of $K\times \RR$.

By Lemma~\ref{Hzero}, $\Tan_pN$ cannot be vertical at any regular point.  In particular,
this means that the function $f$ is smooth on except on the compact set $\pi(X)$
(where $\pi:\RR^n\times \RR\to \RR^n$ is projection
onto the first factor.)

{\bf Claim 2}: $\sup \{ |Df(x)| : x\in K\setminus \pi(X)\} \le \sup\{|Df(x)|: x\in \partial K\}$.

For suppose not.  Then for every small $\eps>0$, the minimum distance between
the graph of $f|K$ and the graph of $(f+\eps)|K$ is attained at a pair of interior
points $p\in \text{graph}(f|K)$ and $q\in \text{graph}((f+\eps)|K)$.  
Note that the tangent cone to $N$ at $p$ is contained in the 
halfspace $\{v: v\cdot (q-p)\le 0\}$, which implies that the tangent cone
is a plane, which by the standard regularity theory for area minimizing hypersurfaces implies
that $p$ is a regular point.  Likewise $q$ is a regular point.
But now $p$ and $q$ violate Lemma~\ref{avoidance}.  This completes the proof of claim~2.

Now claim 2 implies that $f$ is locally Lipschitz.  But that implies (by standard PDE or by standard
GMT) that $f$ is smooth on $W$.  (For the GMT argument, consider the tangent cone at any point.
By claim 2, the tangent cone is the graph of a lipschitz function.  Thus it suffices to show that there
is no nonplanar minimal cone that is the graph of lipshitz function $u$. By dimension reducing, we
may assume that the cone is smooth away from the origin.
 Let $\vv$ be the unit vector for which $f(\vv)$ is a maximum.  Near $\vv$, the graph of $f$ lies below
 the tangent plane at $(\vv,f(\vv))$.  Hence by the strong maximum principle, $f$ is linear in a neighborhood of $\vv$.
 By analytic continuation, $f$ is linear everywhere.)
 \end{proof}
 
\section{$\kappa_1/h$ bounded below implies convex type}\label{convex-type-appendix}

\begin{theorem}
Suppose $t \in [0,\infty)\mapsto M(t)$ is a (possibly singular) mean curvature flow of mean convex $(n-1)$-dimensional
surfaces in a smooth $n$-dimensional Riemannian manifold.  
Let $Z=(z,t)$ be a spacetime singular point of the flow with $t>0$, and suppose that
\[
   \kappa_1/h  \ge c  > -\infty  \tag{*}
\]
in a space-time neighborhood of $Z$.

Then the singular point $z\in M_t$ has convex type.
\end{theorem}

\begin{proof}
In~\cite{white-nature}, this (and all the results of that paper) are proved under the restriction:
\begin{enumerate}[\upshape (\dag)]
\item
 For $n>7$, we work in $\RR^n$ and we consider the flow only up to and including the first singular time. 
\end{enumerate}
In fact, the {\em only} place that restriction is explicitly used is in the proof of
\cite{white-nature}*{theorem~4}.  
In the proof of that theorem, the restriction~\thetag{\dag} is used only to show that $\kappa_1/h$
is bounded below in a (space-time) neighborhood  of the singularity in question up to and including the time $t$.
Thus, if we assume (as here) the hypothesis~\thetag{*} that $\kappa_1/h$ is bounded below in a neighborhood
of $Z$, then theorem 4, and therefore all the other theorems of \cite{white-nature}, hold at that singularity,
without assuming the restriction~\thetag{\dag}.   In particular, the singularity is of convex type 
by \cite{white-nature}*{theorem~1} and its corollary.

Incidentally, the assumption~\thetag{*} that $\kappa_1/h$ is bounded below in a spacetime neighborhood of the
singularity (both before and {\em after} the singularity) significantly simplifies the proofs 
in~\cite{white-nature} in various places.  
In particular, one can (throughout that paper) work with the class $\FF$  of all blowup flows 
(limit flows) at $Z$
 rather than the class of what are are called there
``special limit flows''.    For example, \cite{white-nature}*{theorem~9} asserts 
that any each flow in $\FF$ is convex for all times $\tau\le 0$. 
But the class of blowup flows at a space-time point is trivially closed under time translation, so it follows
immediately that the restriction $\tau\le 0$ is not necessary.   (In \cite{white-nature}, an additional argument
involving an unpublished reference [W6] was given to remove the restriction $\tau\le 0$.)
\end{proof}

\begin{bibdiv}

\begin{biblist}

\bib{Brakke}{book}{
   author={Brakke, Kenneth A.},
   title={The motion of a surface by its mean curvature},
   series={Mathematical Notes},
   volume={20},
   publisher={Princeton University Press},
   place={Princeton, N.J.},
   date={1978},
   pages={i+252},
   isbn={0-691-08204-9},
   review={\MR{485012 (82c:49035)}},
}

\bib{Hardt-Simon-Boundary}{article}{
   author={Hardt, Robert},
   author={Simon, Leon},
   title={Boundary regularity and embedded solutions for the oriented
   Plateau problem},
   journal={Ann. of Math. (2)},
   volume={110},
   date={1979},
   number={3},
   pages={439--486},
   issn={0003-486X},
   review={\MR{554379 (81i:49031)}},
   doi={10.2307/1971233},
}

\bib{Huisken-Sinestrari}{article}{
   author={Huisken, Gerhard},
   author={Sinestrari, Carlo},
   title={Convexity estimates for mean curvature flow and singularities of
   mean convex surfaces},
   journal={Acta Math.},
   volume={183},
   date={1999},
   number={1},
   pages={45--70},
   issn={0001-5962},
   review={\MR{1719551 (2001c:53094)}},
   doi={10.1007/BF02392946},
}

\bib{Ilmanen-Elliptic}{article}{
   author={Ilmanen, Tom},
   title={Elliptic regularization and partial regularity for motion by mean
   curvature},
   journal={Mem. Amer. Math. Soc.},
   volume={108},
   date={1994},
   number={520},
   pages={x+90},
   issn={0065-9266},
   review={\MR{1196160 (95d:49060)}},
}

\bib{White-Topology}{article}{
   author={White, Brian},
   title={The topology of hypersurfaces moving by mean curvature},
   journal={Comm. Anal. Geom.},
   volume={3},
   date={1995},
   number={1-2},
   pages={317--333},
   issn={1019-8385},
   review={\MR{1362655 (96k:58051)}},
}

\bib{White-Stratification}{article}{
   author={White, Brian},
   title={Stratification of minimal surfaces, mean curvature flows, and
   harmonic maps},
   journal={J. Reine Angew. Math.},
   volume={488},
   date={1997},
   pages={1--35},
   issn={0075-4102},
   review={\MR{1465365 (99b:49038)}},
   doi={10.1515/crll.1997.488.1},
}

\bib{White-Size}{article}{
   author={White, Brian},
   title={The size of the singular set in mean curvature flow of mean-convex
   sets},
   journal={J. Amer. Math. Soc.},
   volume={13},
   date={2000},
   number={3},
   pages={665--695 (electronic)},
   issn={0894-0347},
   review={\MR{1758759 (2001j:53098)}},
   doi={10.1090/S0894-0347-00-00338-6},
}

\bib{white-nature}{article}{
   author={White, Brian},
   title={The nature of singularities in mean curvature flow of mean-convex
   sets},
   journal={J. Amer. Math. Soc.},
   volume={16},
   date={2003},
   number={1},
   pages={123--138 (electronic)},
   issn={0894-0347},
   review={\MR{1937202 (2003g:53121)}},
   doi={10.1090/S0894-0347-02-00406-X},
}

\bib{White-Local}{article}{
   author={White, Brian},
   title={A local regularity theorem for mean curvature flow},
   journal={Ann. of Math. (2)},
   volume={161},
   date={2005},
   number={3},
   pages={1487--1519},
   issn={0003-486X},
   review={\MR{2180405 (2006i:53100)}},
   doi={10.4007/annals.2005.161.1487},
}

\bib{White-Boundary}{article}{
 author={White, Brian},
 title={Boundary Regularity for Mean Curvature Flow},
 note={In preparation},
 date={2011},
}

\end{biblist}

\end{bibdiv}

\end{document}